\newtheorem{theorem}{Theorem}[section]
\newtheorem{lemma}[theorem]{Lemma}
\newtheorem{proposition}{Proposition}
\newtheorem{definition}[theorem]{Definition}
\theoremstyle{definition}
\newtheorem{remark}{Remark}
\numberwithin{equation}{section}
\begin{document}


\title{\textbf{Spectral convergence for high contrast elliptic periodic problems with a defect via homogenization}}

\author{M.I. Cherdantsev$\dag$\thanks{The author thanks  V.P. Smyshlyaev and I.V. Kamotski for their help and attention to this work.
The author also thanks V.V. Kamotski for valuable suggestions.
\vspace{6pt}} \\\vspace{6pt}  $\dag$Department of Mathematical
Sciences, University of Bath, Bath, BA2 7AY, UK\\ Email:
mic22@bath.ac.uk}

\maketitle

\maketitle

\begin{abstract}
We consider an eigenvalue problem for a divergence form elliptic
operator $A_\varepsilon$ with high contrast periodic coefficients
with period $\varepsilon$ in each coordinate, where $\varepsilon$ is
a small parameter. The coefficients are perturbed on a bounded
domain of `order one' size. The local perturbation of coefficients
for such operator could result in emergence of localized waves -
eigenfunctions with corresponding eigenvalues lying in the gaps of
the Floquet-Bloch spectrum. We prove that, for the so-called double
porosity type scaling, the eigenfunctions decay exponentially at
infinity, uniformly in $\varepsilon$. Then, using the tools of
two-scale convergence for high contrast homogenization, we prove the
strong two-scale compactness of the eigenfunctions of
$A_\varepsilon$. This implies that the eigenfunctions converge in
the sense of the strong two-scale convergence to the eigenfunctions
of a two-scale limit homogenized operator $A_0$, consequently
establishing `asymptotic one-to-one correspondence' between the
eigenvalues and the eigenfunctions of these two operators. We also
prove by direct means the stability of the essential spectrum of the
homogenized operator with respect to the local perturbation of its
coefficients. That allows us to establish not only the strong
two-scale resolvent convergence of $A_\varepsilon$ to $A_0$ but also
the Hausdorff convergence of the spectra of $A_\varepsilon$ to the
spectrum of $A_0$, preserving the multiplicity of the isolated
eigenvalues.

\medskip

\textit{Keywords:} localized modes, elliptic operators, perturbed
periodic operators, multiscale methods, two-scale convergence,
high-contrast homogenization

\medskip

\textit{AMS Subject Classifications:} 35B27, 35P99

\end{abstract}

\section{Introduction}
In this paper we consider a high contrast two-phase periodic medium
with a small period and with a `finite size' defect filled by a
third phase, see Fig. \ref{fig1}. This physically represents, for
instance, a simplified model of cross-section of a photonic crystal
fiber. Mathematically, the problem relates to a compact perturbation
of $\varepsilon$-periodic coefficients in a divergence form elliptic
operator $A_\varepsilon$. The behaviour of $A_\varepsilon $ and its
spectral characteristics as $\varepsilon \to 0$ is of the main
interest. A similar problem is considered in \cite{SmysKam} using
the method of asymptotic expansions, but the present study pursues
different aims and approaches the problem from another direction,
namely developing an appropriate version of the two-scale
convergence technique \cite{Ngu, Al, Zhikov2}. As a result we obtain
a complete description of the asymptotic (with respect to
$\varepsilon$) behaviour of the localized modes and other spectral
characteristics for the operator $A_\varepsilon$ in terms of an
explicitly described (two-scale) limit operator $A_0$. For other
recent applications of the high contrast homogenization techniques
see also \cite{Briane, Bouchitte, Bellieud, CSZH, CH, Babych}.

In the absence of a defect, Zhikov considers in \cite{Zhikov1} a
divergence form elliptic operator $\widehat{A}_\varepsilon$ (denoted
by $A_\varepsilon$ in \cite{Zhikov1}) with periodic coefficients
corresponding to a double-porosity model \cite{ADH, BMP}
($A_\varepsilon$ in our notation is obtained from
$\widehat{A}_\varepsilon$ by a compact perturbation of its
coefficients). Operators of such type have the Floquet-Bloch
essential spectrum, displaying a band-gap structure. Zhikov proves
that the spectra of $\widehat{A}_\varepsilon$ converge in the sense
of Hausdorff to the spectrum of a certain two-scale homogenized
operator $\widehat{A}_0$ with constant coefficients, see also
\cite{Hempell, Zhikov2}, and that $\widehat{A}_0$ is the limit of
$\widehat{A}_\varepsilon$ in the sense of strong two-scale resolvent
convergence. The spectrum of $\widehat{A}_0$ is purely essential and
displays an explicit band-gap structure. It is well known, see e.g.
\cite{ReedSimon, FigKlein}, that in the case of a compact
perturbation of periodic coefficients in the elliptic operator
$\widehat{A}_\varepsilon$ its essential spectrum remains
unperturbed. The only extra spectrum that can emerge in the gaps due
to the perturbation is a discrete one (isolated eigenvalues with
finite multiplicity).\footnote[1]{We do not concern in this paper
the issue of whether \textit{embedded} eigenvalues can emerge on the
bands as a result of the perturbation.} Such an extra spectrum does
emerge at least under some assumptions, e.g. \cite{FigKlein,
SmysKam}. The latter corresponds physically to localized modes
emerging near the defect. In order to establish the strong two-scale
convergence of the eigenfunctions of $A_\varepsilon $ we need their
strong two-scale compactness. The latter requires in turn an
exponential decay of the eigenfunctions \textit{uniform in
$\varepsilon$}.

The problem of wave localization (i.e. of the existence of
eigenvalues with corresponding eigenfunctions decaying
exponentially) in the gaps of the essential spectrum has been
intensively investigated for a wide range of differential operators
over the last decades. The results obtained up to date ensure the
exponential decay of eigenfunctions of $A_\varepsilon$ for a
\textit{fixed} $\varepsilon $, see e.g. \cite{FigKlein}. However
this is insufficient for establishing the required compactness.
Moreover, the developed methods, e.g. \cite{BCH} and \cite{FigKlein}
(the latter using the method of Agmon\cite{Agmon}), seem to be
insufficient for the present purpose. The reason is that in order to
obtain the \textit{uniform} exponential decay one has to perform
some kind of two-scale asymptotic analysis, investigating the
behaviour of the eigenfunctions on small and large scales
simultaneously. To achieve this we supplement the method of
\cite{Agmon} by the related two-scale techniques, which play a
crucial role. As a result, we obtain a uniform estimate with the
decay exponent $\alpha$ (see (\ref{15a}) and (\ref{11}) below) which
ensures the compactness, but may also be of an independent interest.
On one hand, it is sharp in a sense. On the other hand, it behaves
qualitatively entirely different compared to e.g. the one in
\cite{BCH}: while the one in \cite{BCH} is proportional to the
square root of the distance to the gap end, the decay exponent we
derive becomes large on approaching the left end of the gap and
small near the right end.

The structure of the paper is the following. We first define the
problem in Section \ref{s2}, describe the two-scale limit operator
$A_0$ and state the main result. We then consider a subsequence of
eigenvalues of $A_\varepsilon$ converging to some point $\lambda_0$
lying in a gap of the spectrum of $\widehat{A}_0$. In Section
\ref{s3} we prove (Theorem \ref{theorem1}) the uniform exponential
decay for the eigenfunctions of $A_\varepsilon$. Section \ref{s4} is
devoted to the proof of a main auxiliary lemma that is employed in
the previous section, which may also be of an independent interest.
In Section \ref{s5} we list some properties of the two-scale
convergence and several related statements which we use in the next
section. Employing the uniform exponential decay, we establish in
Section \ref{s6} (see Theorem \ref{th6}) the strong two-scale
compactness of (normalized) eigenfunctions of $A_\varepsilon$, see
e.g. \cite{Zhikov2, Zhikov1}. This implies that, up to a
subsequence, the eigenfunctions two-scale converge to a function,
which is eventually proved to be an eigenfunction of the two-scale
limit operator $A_0$ with a defect, which could be considered as a
perturbation of $\widehat{A}_0$. Accordingly $\lambda_0$ is an
eigenvalue of $A_0$. The two-scale convergence of the eigenfunctions
together with the results of \cite{SmysKam} on the existence of the
eigenvalues in the gaps and related error bounds allow us to make a
conclusion about the `asymptotic one-to-one correspondence' between
eigenfunctions and eigenvalues of the operators $A_\varepsilon$ and
$A_0$ as $\varepsilon \to 0$. In the last section we prove by direct
means (via the Weyl's sequences) the stability of the essential
spectrum of $\widehat{A}_0$ with respect to the local perturbation
of its coefficients (see Theorem \ref{th7}). Thereby this
establishes the convergence of the spectra of $A_\varepsilon$ to the
spectrum of $A_0$ in the sense of Hausdorff (Theorem \ref{th1}).

\section{Notation, problem formulation, limit operator and the main result}\label{s2}
We will use the following notation for the geometric configuration
visualized on Figure \ref{fig1}, cf. \cite{SmysKam}. Consider a
periodic set of unit cubes
\begin{equation}\label{1}
\{Q: Q=[0,1)^n + \xi, \, \xi \in \mathbb{Z}^n\}.
\end{equation}
Let $F_0$ be an open periodic set with period one in each coordinate
such that $F_0 \cap Q \Subset Q$ is a connected domain with
infinitely smooth boundary. We denote $F_0 \cap Q$ by $Q_0$ and
$Q\backslash\overline Q_0$ by $Q_1$. Notice that the position of the
particular set $Q_0, \, Q_1$ or $Q$ depends on $\xi\in
\mathbb{Z}^n$, however we will not reflect this fact in the notation
to simplify the latter. Regularity assumptions on the boundary could
be relaxed.\footnote[2]{In particular, the results on the two-scale
convergence stated in the paper remain valid at least under the
assumption of Lipschitz regular boundaries. The
$\varepsilon^{1/2}$-order bounds, as they were obtained in
\cite{SmysKam}, require higher regularity.} Let $\Omega_2$ be a
bounded domain with a sufficiently smooth boundary, containing the
origin; its complement is denoted by $\Omega_1$, $\Omega_1 =
\mathbb{R}^n \backslash \overline{\Omega}_2$.

\begin{figure}
\centerline{\epsfbox{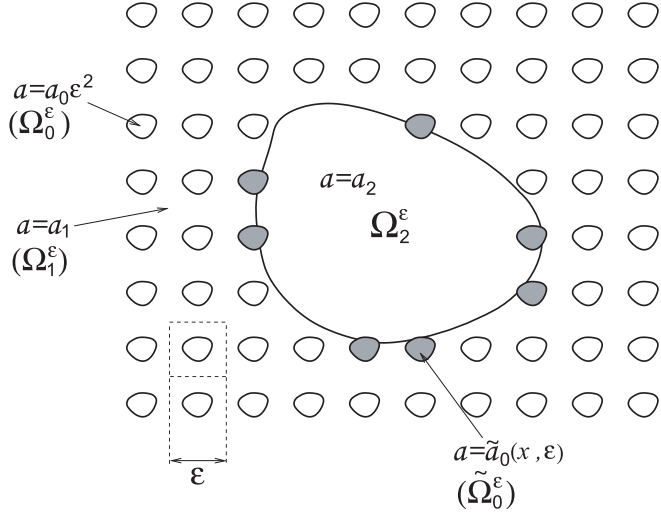}} \caption{A defect in a rapidly
oscillating high contrast periodic medium, cf. \cite[Fig.
1]{SmysKam}.} \label{fig1}
\end{figure}

We define the `inclusion phase' or the `soft phase'
$\Omega^\varepsilon_0$ as
\[
\Omega^\varepsilon_0 = \bigcup_{\varepsilon Q_0
\subset\Omega_1} \varepsilon Q_0, 
\]
where $\varepsilon > 0$ is a small parameter. The set of inclusions
$\varepsilon Q_0$ which intersect the boundary of $\Omega_2$ is
denoted by $\widetilde{\Omega}^\varepsilon_0$. The `matrix phase',
denoted by $\Omega^\varepsilon_1$, is the complement to the
inclusions in $\Omega_1$, i.e. $\Omega^\varepsilon_1 =
\Omega_1\backslash
\overline{(\Omega^\varepsilon_0\cup\widetilde{\Omega}^\varepsilon_0)}$.
`Defect domain' $\Omega^\varepsilon_2$ is defined by $\Omega_2
\backslash \overline{\widetilde{\Omega}^\varepsilon_0}$. We also use
the notation $\theta_\Omega$ for the characteristic function of a
set $\Omega$ and $B_R$ for the open ball of radius $R$ centered at
the origin.

We consider an elliptic operator $A_\varepsilon$, self-adjoint in
$L^2\left(\mathbb{R}^n\right)$,
\begin{equation}
   A_\varepsilon u^\varepsilon\,:=\, -\nabla\cdot \biggl(a(x,\varepsilon) \nabla
u^\varepsilon(x)\biggr),\ \  x\in  \mathbb{R}^n,
      \label{3}
\end{equation}
in particular the eigenvalue problem
\begin{equation}
   A_\varepsilon u^\varepsilon\,=\,\lambda_\varepsilon
    u^\varepsilon
      \label{2}
\end{equation}
for its point spectrum. The coefficient $a(x,\varepsilon)$ is given
by the formula
\begin{equation}
a(x,\varepsilon)=
    \left\{
      \begin{array}{ll}
         a_0\varepsilon^2, & x\in \Omega_0^\varepsilon,
          \\
          a_1, & x\in \Omega_1^\varepsilon,
          \\
         a_2, & x\in \Omega_2^\varepsilon,
          \\
         \widetilde a_0(x,\varepsilon), & x\in \widetilde\Omega_0^\varepsilon,
      \end{array} \right.
      \label{4}
\end{equation}
where measurable $\widetilde{a}_0(x,\varepsilon)$ is such that
\begin{equation}\label{5a}
\mbox{either } \widetilde{A}_0\, \varepsilon^{2-\theta} \leq
\widetilde{a}_0(x,\varepsilon) \leq \widetilde{B}_0 \,
\varepsilon^{2-\theta} \mbox{ for all } \varepsilon, \mbox{ or }
\widetilde{a}_0(x,\varepsilon) = a_0\, \varepsilon^2 \mbox{ for all
} \varepsilon.
\end{equation}
Here $a_0$, $a_1$, $a_2$, $\widetilde{A}_0$, $\widetilde{B}_0$ and
$\theta$ are some positive constants independent of $\varepsilon$,
$\theta \in (0,2]$. Notice that this includes as particular cases
e.g. the case of `removed' boundary inclusions, i.e.
$a(x,\varepsilon)=a_1$ if $x\in \widetilde\Omega_0^\varepsilon \cap
\Omega_1$, $a(x,\varepsilon)=a_2$ if $x\in
\widetilde\Omega_0^\varepsilon \cap \Omega_2$, and the case of the
`full' inclusions, $\widetilde{a}_0(x,\varepsilon) = a_0\,
\varepsilon^2$. The domain of $A_\varepsilon$ is defined in a
standard way via Friedrichs extension procedure with a bilinear
form, see (\ref{5}) below, defined on $H^1(\mathbb R^n)$.

For any $\varepsilon > 0$ the operator $A_\varepsilon$ is an
operator with $\varepsilon$-periodic coefficients, which are
compactly perturbed (within bounded domain $\Omega_2^\varepsilon
\cup \widetilde\Omega_0^\varepsilon$). This implies (e.g.
\cite{ReedSimon, FigKlein}) that its essential spectrum coincides
with the Floquet-Bloch spectrum of the associated `unperturbed'
operator $\widehat{A}_\varepsilon$, with only extra spectrum being
hence the discrete spectrum in the gaps of
$\widehat{A}_\varepsilon$.\footnote[3]{This does not rule out
possible emergence of embedded eigenvalues on the bands, not
considered in this paper.} Note that the spectrum of
$\widehat{A}_\varepsilon$ contains gaps for small enough
$\varepsilon$, cf. \cite{Hempell, Zhikov2, Zhikov1}, and there is
often an extra discrete spectrum in the gaps of $\sigma_{\textrm
ess}( A_\varepsilon)$, e.g. \cite{SmysKam}. By definition,
$u^\varepsilon \in H^1(\mathbb{R}^n)$, $u^\varepsilon \not\equiv 0$,
is an eigenfunction of the eigenvalue problem (\ref{2}) with an
eigenvalue $\lambda_\varepsilon$ if
\begin{equation}
\int\limits_{\mathbb{R}^n} a(x,\varepsilon) \nabla u^\varepsilon
\cdot \nabla w \,dx = \lambda_\varepsilon \int\limits_{\mathbb{R}^n}
u^\varepsilon w \,dx \label{5}
\end{equation}
for all $w \in H^1(\mathbb{R}^n)$.

The aim of this work is to establish that as $\varepsilon \to 0$ the
operator $A_\varepsilon$ converges in the appropriate sense (namely,
in the sense of two-scale convergence, see Section \ref{s5}) to a
`two-scale' limit operator $A_0$, which we describe next. For the
rest of the present section we assume that $Q=[0,1)^n$, considering
all functions of two variables $(x,y)$ to be 1-periodic in each
coordinate with respect to $y$. The `two-scale' limit operator $A_0$
is analogous to the one introduced in the defect free setting by
Zhikov \cite{Zhikov2, Zhikov1} and acts in a Hilbert space
\begin{equation}
\mathcal {H}_0:=\biggl\{ u(x,y)\in L^2\left(\mathbb{R}^n\times
Q\right) \biggl\vert \, u(x,y)=u_0(x)+v(x,y), u_0\in
L^2\left(\mathbb{R}^n\right), \biggl. v\in\,L^2\left(\Omega_1;
\,L^2(Q_0)\right) \biggr\},  \label{5.1}
\end{equation}
with the natural inner product inherited from $L^2(\mathbb{R}^n
\times Q)$ and $\mathcal {H}_0$ being its closed subspace, cf.
\cite{Zhikov1}. It is implied that $v$ is extended by zero for $y\in
Q_1$ or $x \in \Omega_2$. The operator $A_0$ is defined as generated
by a (closed) symmetric and bounded from below bilinear form
$B_0(u,w)$ acting in a dense subspace
\begin{equation}
\mathcal {V}\, =
\,H^1\left(\mathbb{R}^n\right)\,+\,L^2\left(\Omega_1,
H^1_0(Q_0)\right)
 \label{Qdomain}
\end{equation}
of $\mathcal {H}_0 =
L^2\left(\mathbb{R}^n\right)\,+\,L^2\left(\Omega_1,
L^2(Q_0)\right)$, which is defined as follows: for $u=u_0+v, w=w_0+z
\in\,\mathcal {V}$,
\begin{equation}
 B_0(u,w)\,=\,a_2\int\limits_{\Omega_2}\nabla u_0 \cdot  {\nabla w_0} \,dx+
\int\limits_{\Omega_1}A^{\rm hom}\nabla u_0 \cdot  {\nabla w_0} \,dx
+ a_0 \int\limits_{\Omega_1} \int\limits_{Q_0} \nabla_y v \cdot
{\nabla_y z} \,dy \,dx. \label{6}
\end{equation}
Here $A^{\rm hom}=\left(A^{\rm hom}_{ij}\right)$ is the standard
``porous'' homogenized (symmetric, positive-definite) matrix for the
periodic medium as described above but when no defect is present and
with $a_0=0$, see e.g. \cite[\S3.1]{ZhiKozOle}:
\begin{equation}
A^{\rm hom}_{ij}\xi_i\xi_j=\inf_{w\in C^\infty_{\rm per}(Q)}
\int\limits_{Q_1}a_1\vert\xi+\nabla w\vert^2\,dy\,\,\,\,\left(\xi
\in \mathbb R^n\right). \label{eq:ahom}
\end{equation}
Here $C^\infty_{\textrm per}(Q)$ stands for the set of infinitely
smooth functions with periodic boundary conditions. Then one can see
(cf. \cite{Zhikov1}) that the form is indeed bounded from below,
densely defined and closed. Hence, according to the standard
Friedrichs extension procedure, e.g. \cite{ReedSimon}, $A_0$ can be
defined as a self-adjoint operator with a domain $\mathcal D (A_0)
\subset \mathcal {V}$. A function $u^0(x,y) = u_0(x) + v(x,y) \in
\mathcal{V}$, $u^0(x,y) \not\equiv 0$, is an eigenfunction of the
limit operator $A_0$ corresponding to an eigenvalue $\lambda_0$ if
and only if
\begin{equation}
B_0(u^0,w) = \lambda_0 \int\limits_{\mathbb{R}^n} \int\limits_{Q}
(u_0 + v) (w_0 + z) \,dy \,dx . \label{10}
\end{equation}
for any $w=w_0+z \in\,\mathcal{V}$  (we assume where it is possible
that a function defined on a smaller domain is extended by zero on a
larger domain).\footnote[4]{Explicit example in \cite[\S 5]{SmysKam}
ensure the existence of isolated eigenvalues of $A_0$ of finite
multiplicity in the gaps of $\sigma(\widehat{A}_0)$ in a particular
situation.}

The `unperturbed' operators $\widehat{A}_\varepsilon$ and
$\widehat{A}_0$ could be defined analogously to $A_\varepsilon$ and
$A_0$ formally setting above $\Omega_2 = \emptyset$ and $\Omega_1 =
\mathbb{R}^n$. (See also \cite{Zhikov2, Zhikov1}, where these
operators are denoted by $A_\varepsilon$ and $A$ respectively.)

\begin{figure}
\centerline{\epsfbox{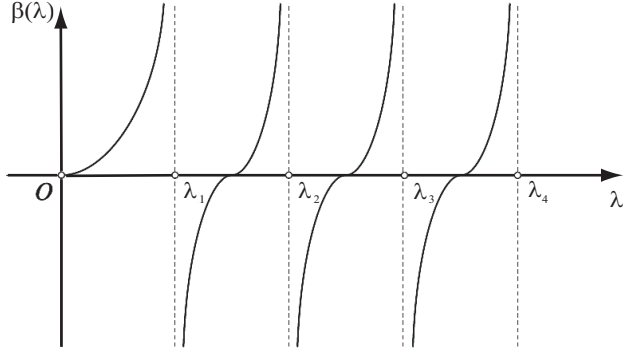}} \caption{$\beta(\lambda )$, cf.
\cite{Zhikov1}.} \label{fig2}
\end{figure}

We next describe a function $\beta(\lambda)$ which was introduced by
Zhikov \cite{Zhikov2, Zhikov1} (cf. also \cite{Bouchitte}) and plays
an important role in our considerations. Let $\lambda_j$ and
$\varphi_j$, $j=1,2,\ldots$, be eigenvalues and corresponding
orthonormalized eigenfunctions of operator $T$ defined as
\begin{equation}
T f:= - a_0 \Delta f, \quad f\in H^1_0(Q_0) \cap H^2(Q_0). \label{8}
\end{equation}
Note that the eigenvalues of $T$ belong to the spectrum of
$\widehat{A}_0$, see \cite{Zhikov2}. For $\lambda \neq \lambda_j$,
$j\geq 1$, denote by $b$ the solution to
\begin{equation}
T b - \lambda b = - a_0 \Delta b - \lambda b =   1, \,\, b\in
H^1_0(Q_0). \label{13a}
\end{equation}
The function $\beta(\lambda)$ is defined by
\begin{equation}
\beta(\lambda):= \lambda \big(1 + \lambda \langle b \rangle_y \big)
= \,\lambda\,+\,\lambda^2 \sum_{j=1}^\infty
\frac{\langle\varphi_j\rangle_y^2}{\lambda_j-\lambda} \label{11}
\end{equation}
where $\langle f \rangle_y : = \int\limits_{Q} f(y) \,dy$. It is
well-defined for any $\lambda $ except $\lambda = \lambda_j$ with
$\langle\varphi_j\rangle_y \neq 0$, monotonically increasing between
such points, see Figure \ref{fig2}. This function describes the
structure of $\sigma(\widehat{A}_0)$, see \cite{Zhikov2}. Namely,
the intervals where $\beta(\lambda) \geq 0$ correspond to the bands
of the spectrum of $\widehat{A}_0$. Isolated points of the spectrum
of $\widehat{A}_0$, i.e. $\lambda_j$ such that $\langle \varphi_j
\rangle_y=0$ and $\beta(\lambda_j) < 0$, can also be regarded as
degenerate bands. The intervals on which $\beta(\lambda) < 0$
(excluding $\lambda_j$) are gaps.

It was shown in \cite{Zhikov1} (see also \cite{Hempell, Zhikov2})
that $\sigma(\widehat{A}_\varepsilon)$ converges in the sense of
Hausdorff to $\sigma(\widehat{A}_0)$, while
$\widehat{A}_\varepsilon$ converges to $\widehat{A}_0$ in the sense
of the strong two-scale resolvent convergence (cf. Sections \ref{s5}
and \ref{s6} below) implying the convergence of spectral projectors,
etc.

We aim at showing that similar as well as some further results hold
for the perturbed operators. Namely, our main result is the
following
\begin{theorem}\label{th1}
The operator $A_\varepsilon$ converges to $A_0$ in the sense of the
strong two-scale resolvent convergence. Hence the spectral
projectors also strongly two-scale converge away from the point
spectrum of $A_0$. The spectrum of $A_\varepsilon$ converges in the
sense of Hausdorff to the spectrum of $A_0$. Let $\lambda_0$ be an
isolated eigenvalue of multiplicity $m$ of the operator $A_0$ in the
gap of its essential spectrum. Then, for small enough $\varepsilon$,
there exist exactly $m$ eigenvalues $\lambda_{\varepsilon,i}$ of
$A_\varepsilon$ (counted with their multiplicities) such that
\begin{equation}
|\lambda_{\varepsilon,i} - \lambda_0| \leq C \varepsilon^{1/2}, \,
i=1,\ldots,m, \label{12a}
\end{equation}
with a constant $C$ independent of $\varepsilon $.\footnote[5]{The
error bound (\ref{12a}) employs the results of \cite{SmysKam}
requiring, as stated, higher regularity of $\partial Q_0$. The rest
of the statement of the theorem applies potentially to less regular
boundaries.} If for some sequence $\varepsilon_k\to 0$ a sequence of
eigenvalues $\lambda_{\varepsilon_k}$ of $A_\varepsilon$ converges
to $\lambda_0$ which is in the gap of the essential spectrum of
$A_0$, then, $\lambda_0$ is an isolated eigenvalue of $A_0$ of a
finite multiplicity $m$ and for large enough $k$,
$\lambda_{\varepsilon_k} \in \{\lambda_{\varepsilon_k,i}, \,
i=1,\ldots,m\}$.
\end{theorem}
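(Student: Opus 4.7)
The plan is to assemble the theorem from three technical ingredients proven in the subsequent sections---the uniform exponential decay of eigenfunctions (Theorem~\ref{theorem1}), the strong two-scale compactness of normalized eigenfunctions (Theorem~\ref{th6}), and the stability of $\sigma(\widehat{A}_0)$ under local perturbations (Theorem~\ref{th7})---together with two external inputs: Zhikov's strong two-scale resolvent and Hausdorff convergence of $\widehat A_\varepsilon$ to $\widehat A_0$ from \cite{Zhikov1}, and the $O(\varepsilon^{1/2})$ quasimode construction of \cite{SmysKam}.

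First I would establish the strong two-scale resolvent convergence of $A_\varepsilon$ to $A_0$. For a shift $\mu>0$ making $A_\varepsilon+\mu$ uniformly positive, $f\in L^2(\mathbb R^n)$ and $u^\varepsilon:=(A_\varepsilon+\mu)^{-1}f$, testing (\ref{5}) against $w=u^\varepsilon$ gives uniform bounds on $\|u^\varepsilon\|_{L^2}$ and on $\|\sqrt{a(\cdot,\varepsilon)}\,\nabla u^\varepsilon\|_{L^2}$. The two-scale compactness tools of Section~\ref{s5} then extract a subsequence along which $u^\varepsilon$ two-scale converges to some $u^0=u_0(x)+v(x,y)\in\mathcal V$, with the gradient two-scale behaving as $\nabla u_0+\nabla_y\tilde v$ on the matrix phase $Q_1$ and with $\varepsilon\nabla u^\varepsilon$ two-scale converging to $\nabla_y v$ on the soft phase $Q_0$. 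Testing (\ref{5}) against two-scale oscillating test functions of the form $w_0(x)+\varepsilon z(x,x/\varepsilon)$, with $z$ supported appropriately, and letting $\varepsilon\to 0$, identifies $u^0$ as the unique solution of $(A_0+\mu)u^0=f$ in $\mathcal V$; uniqueness forces the whole sequence to converge. Upgrading to \emph{strong} two-scale convergence comes from convergence of the quadratic energies, and a standard Stieltjes representation then transfers convergence to the spectral projectors $E_\varepsilon((a,b])$ whenever $a,b\notin\sigma_p(A_0)$.

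For the Hausdorff convergence of spectra I would verify the two standard inclusions. Given $\lambda_0\in\sigma(A_0)$: if $\lambda_0\in\sigma_{\rm ess}(A_0)=\sigma(\widehat A_0)$ by Theorem~\ref{th7}, then Zhikov's Hausdorff convergence supplies $\mu_\varepsilon\in\sigma(\widehat A_\varepsilon)=\sigma_{\rm ess}(A_\varepsilon)\subset\sigma(A_\varepsilon)$ with $\mu_\varepsilon\to\lambda_0$; if $\lambda_0$ is an isolated eigenvalue in a gap, the $\varepsilon^{1/2}$-quasimodes of \cite{SmysKam} combined with the standard spectral approximation lemma produce eigenvalues of $A_\varepsilon$ within $O(\varepsilon^{1/2})$ of $\lambda_0$. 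Conversely, suppose $\lambda_{\varepsilon_k}\in\sigma(A_{\varepsilon_k})$ with $\lambda_{\varepsilon_k}\to\lambda_0$; splitting into essential and discrete contributions, the essential part again yields $\lambda_0\in\sigma(\widehat A_0)=\sigma_{\rm ess}(A_0)$, while for the discrete part the normalized eigenfunctions $u^{\varepsilon_k}$ decay exponentially uniformly in $k$ by Theorem~\ref{theorem1}, so Theorem~\ref{th6} produces a strongly two-scale convergent subsequence with \emph{nontrivial} limit $u^0\in\mathcal V$ (the uniform decay being precisely what prevents mass from escaping to infinity); passage to the two-scale limit in (\ref{5}) shows that $u^0$ satisfies (\ref{10}), so $\lambda_0\in\sigma_p(A_0)$.

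Finally, for the multiplicity count and the converse, fix a small contour $\gamma$ in $\mathbb C$ enclosing $\lambda_0$ and no other point of $\sigma(A_0)$, and consider the Riesz projections $P_\varepsilon=\tfrac1{2\pi i}\oint_\gamma(z-A_\varepsilon)^{-1}\,dz$. The strong two-scale resolvent convergence makes $P_\varepsilon$ converge strongly two-scale to the rank-$m$ projection $P_0$; the $m$ quasimodes of \cite{SmysKam} built on a basis of $\operatorname{Range}P_0$ give $\operatorname{rank}P_\varepsilon\ge m$ for small $\varepsilon$, while Theorem~\ref{th6} combined with the discrete-case argument above yields the matching upper bound, giving exactly $m$ eigenvalues of $A_\varepsilon$ inside $\gamma$; shrinking $\gamma$ to radius $C\varepsilon^{1/2}$ via \cite{SmysKam} then yields (\ref{12a}). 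The converse statement drops out by combining the discrete-case limit argument of the previous paragraph (which identifies $\lambda_0$ as an eigenvalue of $A_0$) with this multiplicity count, forcing $\lambda_{\varepsilon_k}\in\{\lambda_{\varepsilon_k,i}\}$ for large $k$. The main obstacle throughout is controlling the passage to the limit in the \emph{discrete} part of the spectrum: without the \emph{uniform in $\varepsilon$} exponential decay of Theorem~\ref{theorem1} the normalized eigenfunctions could lose mass to infinity and the two-scale limit could vanish, and without Theorem~\ref{th7} one could not identify a gap of $\sigma_{\rm ess}(A_0)$ with a gap of $\sigma(\widehat A_0)$, which is required to invoke Zhikov's Hausdorff convergence and conclude that $\lambda_{\varepsilon_k}$ is eventually discrete.
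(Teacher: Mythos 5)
Your proposal is correct and assembles the theorem from exactly the same ingredients and in essentially the same way as the paper: the strong two-scale resolvent convergence via the resolvent equation (the paper's remark after Theorem \ref{th6}), the Hausdorff convergence split into the essential part (Theorem \ref{th7} plus \cite{Zhikov1} and \cite{FigKlein}) and the discrete part (Theorems \ref{theorem1} and \ref{th6}), and the multiplicity count from the quasimodes of \cite{SmysKam} for the lower bound and the orthonormality-preserving strong two-scale limit for the upper bound. Your Riesz-projection packaging of the multiplicity argument is only a cosmetic variant of the paper's direct $l\leq m\leq l(\varepsilon)$ count.
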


A key part in establishing the latter is in controlling the
behaviour at infinity of the eigenfunctions corresponding to the
extra point spectrum which may appear in the spectral gaps of the
unperturbed operator. A central property providing this is a uniform
exponential decay of the eigenfunctions which we prove next.

\section{Uniform exponential decay of the eigenfunctions of
$A_\varepsilon$}\label{s3} Let $\lambda _0$ be a point in a gap of
$\sigma(\widehat{A}_0)$, i.e. such that $\beta (\lambda_0)<0$ and
$\lambda_0 \neq \lambda_j$ for all $j$. Assume $\lambda _0$ is an
accumulation point of the point spectra of $A_\varepsilon$, i.e. for
some subsequence $\varepsilon_k \to 0$ there exist eigenvalues
$\lambda_{\varepsilon_k}$ of $A_\varepsilon$ such that
$\lambda_{\varepsilon_k}\to \lambda_0$ as $k\to \infty$. (Notice
that the results of \cite{FigKlein, SmysKam} ensure in particular
that such series do exist.) We formulate the main result of this
section (and also one of the principal results of the paper) in the
following statement.
\begin{theorem}\label{theorem1}
Let $\lambda_{\varepsilon_k}$ and $u^{\varepsilon_k}$ be sequences
of eigenvalues of the operator $A_\varepsilon$ and corresponding
eigenfunctions normalized in $L^2(\mathbb{R}^n)$, where
$\varepsilon_k$ is some positive sequence converging to zero as $k
\to \infty$. Let $\lambda_0$ be such that $\beta(\lambda_0)$ is
negative and $\lambda_0$ is not an eigenvalue of the operator $T$
given by (\ref{8}). Suppose that $\lambda_{\varepsilon_k}$ converges
to $\lambda_0$. Then for small enough $\varepsilon_k$ eigenfunctions
$u^{\varepsilon_k}$ decay uniformly exponentially at infinity,
namely, for
\begin{equation}\label{15a}
0 < \alpha < \sqrt{-\beta(\lambda_0)/a_1}
\end{equation}
the following holds:
\[
\| e^{\alpha|x|} u^{\varepsilon_k} \|_{L^2(\mathbb{R}^n)} \leq C,
\]
uniformly in $\varepsilon_k$, i.e. for any $0 < \varepsilon_k <
\varepsilon(\alpha)$, with $C=C(\alpha)$ independent of
$\varepsilon$.
\end{theorem}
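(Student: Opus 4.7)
The strategy is to combine Agmon's exponentially weighted energy method with the two-scale spectral structure of the unperturbed double-porosity operator. The hypothesis $\beta(\lambda_0)<0$ plays the role of a \emph{two-scale} spectral gap: it will appear as the effective ``mass'' driving the decay at rate $\alpha<\sqrt{-\beta(\lambda_0)/a_1}$.

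\medskip

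\emph{Step 1 (Agmon identity).} Pick $\rho\in C^\infty(\mathbb{R}^n)$ bounded with $|\nabla\rho|\leq 1$ and $\rho=|x|$ on a large but fixed ball; the final bound will be shown to be independent of the truncation radius, which will then be sent to infinity. Set $\eta=e^{\alpha\rho}$, use $w=\eta^2 u^{\varepsilon_k}\in H^1(\mathbb{R}^n)$ as a test function in (\ref{5}), and rewrite the identity in terms of $v^{\varepsilon_k}:=\eta u^{\varepsilon_k}$:
\[
\int a(x,\varepsilon_k)\,|\nabla v^{\varepsilon_k}|^2\,dx \;-\; \lambda_{\varepsilon_k}\int |v^{\varepsilon_k}|^2\,dx \;=\; \alpha^2\int a(x,\varepsilon_k)\,|\nabla\rho|^2\,|v^{\varepsilon_k}|^2\,dx .
\]
The right-hand side is dominated by $a_1\alpha^2\|v^{\varepsilon_k}\|_{L^2(\Omega_1^{\varepsilon_k})}^2$, plus an $O(\varepsilon_k^{2-\theta})$ contribution from the soft and boundary inclusions, plus an $O(1)$ contribution from the bounded defect region (where $\eta$ is uniformly bounded and $\|u^{\varepsilon_k}\|_{L^2}=1$).

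\medskip

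\emph{Step 2 (Two-scale coercivity).} The substantive input is the auxiliary lemma of Section \ref{s4}, which I would invoke in the following form: for every $\delta>0$ there exist $R_0$ and $\varepsilon_0>0$ such that every $v\in H^1(\mathbb{R}^n)$ supported outside $B_{R_0}$ (so that $a=\widehat a$ on its support) and every $\varepsilon_k<\varepsilon_0$ satisfy
\[
\int \widehat a_{\varepsilon_k}\,|\nabla v|^2\,dx \;-\; \lambda_{\varepsilon_k}\int |v|^2\,dx \;\geq\; \bigl(-\beta(\lambda_0)-\delta\bigr)\int_{\Omega_1^{\varepsilon_k}}|v|^2\,dx \;+\; r_{\varepsilon_k}(v),
\]
with remainder $r_{\varepsilon_k}(v)$ negligible relative to $\|v\|_{L^2}^2$ as $\varepsilon_k\to 0$. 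The sharp constant $-\beta(\lambda_0)$ arises because in each soft cell the optimal fluctuation of $v$ is proportional to the resolvent $b$ solving (\ref{13a}); summing the cell contributions reproduces precisely the series defining $\beta(\lambda_0)$. The hypothesis $\lambda_0\neq\lambda_j$ is exactly what guarantees $b$, and hence $\beta(\lambda_0)$, is well defined.

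\medskip

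\emph{Step 3 (Combination, conclusion, main obstacle).} Introduce a smooth cutoff isolating the outer region $\{|x|>R_0\}$ from the inner one, where $v^{\varepsilon_k}$ is uniformly bounded in $L^2$ thanks to $\|u^{\varepsilon_k}\|_{L^2}=1$ and boundedness of $\eta$. Applying the lemma to the outer part and comparing with the Agmon identity of Step 1 yields
\[
\bigl(-\beta(\lambda_0)-\delta-a_1\alpha^2-o(1)\bigr)\int_{\Omega_1^{\varepsilon_k}}|v^{\varepsilon_k}|^2\,dx \;\leq\; C,
\]
with $C$ independent of $\varepsilon_k$ and of the truncation radius of $\rho$; a cell-wise Poincar\'e inequality relating the soft-phase to the matrix-phase $L^2$-norm on each inclusion then transfers the bound to all of $\Omega_0^{\varepsilon_k}\cap\{|x|>R_0\}$. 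Since $-\beta(\lambda_0)>a_1\alpha^2$ by hypothesis, choosing $\delta$ small keeps the prefactor strictly positive, giving a uniform bound on $\|v^{\varepsilon_k}\|_{L^2}=\|\eta u^{\varepsilon_k}\|_{L^2}$; monotone convergence as the truncation radius in $\rho$ tends to infinity yields the theorem. The principal difficulty is Step 2: standard Agmon arguments rely on a scalar uniform spectral gap, which is unavailable here because $a(x,\varepsilon)$ degenerates on the soft phase and the effective elliptic structure is genuinely two-scale. Producing the \emph{sharp} constant $-\beta(\lambda_0)$ \emph{uniformly in $\varepsilon$} requires constructing an $\varepsilon$-dependent cell ansatz matching (\ref{13a}) and controlling the energy error from that ansatz, which is the main technical content of Section \ref{s4}.
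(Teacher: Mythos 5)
Your overall architecture (Agmon weight with a truncated exponential, the final coercive combination with constant $-\beta(\lambda_0)-a_1\alpha^2$) matches the paper's, but the crucial Step 2 is not merely ``the main technical content to be supplied'' --- as you have stated it, it is false. You claim that for \emph{every} $v\in H^1(\mathbb{R}^n)$ supported outside a large ball,
\[
\int \widehat a_{\varepsilon}\,|\nabla v|^2\,dx-\lambda_{\varepsilon}\int|v|^2\,dx\;\geq\;\bigl(-\beta(\lambda_0)-\delta\bigr)\int_{\Omega_1^{\varepsilon}}|v|^2\,dx+o\bigl(\|v\|_{L^2}^2\bigr).
\]
Since $\lambda_0$ lies in a gap \emph{above} the first spectral band of $\widehat{A}_0$ (every gap has the band containing $0$ below it), the quadratic form of $\widehat{A}_\varepsilon-\lambda_\varepsilon$ is indefinite on such functions: taking $v$ to be an approximate Weyl/Bloch function of $\widehat{A}_\varepsilon$ at an energy $\mu<\lambda_0$ in a lower band, supported far from the origin, makes the left-hand side approximately $(\mu-\lambda_0)\|v\|_{L^2}^2<0$, while the right-hand side is strictly positive (such $v$ is slowly varying, so $\int_{\Omega_1^\varepsilon}|v|^2\approx|Q_1|\,\|v\|^2$). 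No universal coercivity of this form can hold. Relatedly, your heuristic that ``the optimal fluctuation of $v$ in each soft cell is proportional to $b$'' is a stationarity statement, not a minimization: for $\lambda_0>\lambda_1$ the cell energy $\int_{Q_0}\bigl(a_0|\nabla_y w|^2-\lambda_0 w^2\bigr)\,dy$ is unbounded below over $H^1_0(Q_0)$, so $\beta(\lambda_0)$ cannot be obtained as an infimum.

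What actually closes the argument --- and what the paper does --- is to exploit the \emph{eigenfunction equation inside each inclusion} rather than a variational lower bound. One writes $u^\varepsilon=\widetilde{u}^\varepsilon+v^\varepsilon$, where $\widetilde{u}^\varepsilon$ is the extension of $u^\varepsilon|_{\varepsilon Q_1}$ into the inclusions (extension lemma) and $v^\varepsilon\in H^1_0(\varepsilon Q_0)$ solves in each cell the resolvent problem $(-a_0\Delta_y-\lambda_\varepsilon)v^\varepsilon=\lambda_\varepsilon\widetilde{u}^\varepsilon$, which is well-posed uniformly in $\varepsilon$ precisely because $\lambda_0\neq\lambda_j$. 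The test function is $g^2\widetilde{u}^\varepsilon$ (the slowly varying extension, not $g^2u^\varepsilon$), and the dangerous coupling term $\lambda_\varepsilon\int_{\Omega_0^\varepsilon}g^2v^\varepsilon\widetilde{u}^\varepsilon\,dx$ is shown in Lemma \ref{lemma} to equal $(\beta(\lambda_\varepsilon)-\lambda_\varepsilon)\int g^2(\widetilde{u}^\varepsilon)^2\,dx$ up to errors of order $\varepsilon$ times the weighted energy, because the leading part of $v^\varepsilon$ in each cell is $\lambda_\varepsilon\langle\widetilde{u}^\varepsilon\rangle_y\,b_\varepsilon(y)$ with $b_\varepsilon=(T-\lambda_\varepsilon)^{-1}1$. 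It is this approximate \emph{identity} for the specific eigenfunction --- not a coercivity estimate for arbitrary test functions --- that makes $\beta(\lambda_0)$ appear with the right sign. Your Steps 1 and 3 would then go through essentially as in the paper once Step 2 is replaced by this slaving argument.
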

\begin{proof}We drop the index $k$ in $\varepsilon_k$ for
the sake of simplification of notation. So, when we say, for
instance, `sequence $\lambda_\varepsilon$' we actually mean
`subsequence $\lambda_{\varepsilon_k}$'.

The plan of the proof is the following. We first derive `elementary'
a priori estimates for the eigenfunction $u^\varepsilon$ outside the
set of inclusions $\Omega_0^\varepsilon \cup
\widetilde{\Omega}_0^\varepsilon$. Next we study the structure of
the eigenfunction at the small scale and deduce some vital
inequalities for $\varepsilon \nabla u^\varepsilon$ inside the
inclusions. As a central technical step, we then employ in the
integral identity (\ref{5}) a test function with exponentially
growing weight $g^2(|x|)$, see (\ref{4.8})--(\ref{4.9}) below, and
perform some delicate uniform estimates to achieve the result. The
main auxiliary technical results are proven in Lemma \ref{lemma} and
Proposition \ref{prop}.

\textbf{Step 1.} Setting $w=u^\varepsilon$ in (\ref{5}) we have
\[
\varepsilon^2 a_0 \| \nabla u^\varepsilon
\|^2_{L^2(\Omega_0^\varepsilon)} + a_1 \| \nabla u^\varepsilon
\|^2_{L^2(\Omega_1^\varepsilon)} + a_2 \| \nabla u^\varepsilon
\|^2_{L^2(\Omega_2^\varepsilon)} +  \|
\widetilde{a}_0^{1/2}(x,\varepsilon) \, \nabla u^\varepsilon
\|^2_{L^2(\widetilde{\Omega}_0^\varepsilon)} = \lambda_\varepsilon
\|  u^\varepsilon \|^2_{L^2(\mathbb{R}^n)} = \lambda_\varepsilon.
\]
Therefore
\begin{equation}
\| u^\varepsilon
\|_{H^1(\mathbb{R}^n\backslash(\Omega_0^\varepsilon\bigcup\widetilde{\Omega}_0^\varepsilon))}
\leq C \label{12}
\end{equation}
uniformly in $\varepsilon $. From now on $C$ denotes a generic
constant whose precise value is insignificant and can change from
line to line.

\textbf{Step 2.} Let us consider the function $u^\varepsilon$ in a
cell $\varepsilon Q$ corresponding to such $\xi = \xi(\varepsilon)
\in \mathbb Z^n$, see \ref{1}, that the corresponding `inclusion'
$\varepsilon Q_0$ has a nonempty intersection with $\Omega_1$. There
exists an extension $\widetilde{u}^\varepsilon$ of
$u^\varepsilon|_{\varepsilon Q_1}$ to the whole cell $\varepsilon Q$
such that
\begin{equation}
\|  \widetilde{u}^\varepsilon \|_{L^2(\varepsilon Q_0)} \leq C \|
 u^\varepsilon \|_{L^2(\varepsilon Q_1)}, \quad \| \nabla
\widetilde{u}^\varepsilon \|_{L^2(\varepsilon Q_0)} \leq C \| \nabla
u^\varepsilon \|_{L^2(\varepsilon Q_1)}, \label{4.1}
\end{equation}
where $C$ does not depend on $\varepsilon$ or $\xi$, see e. g.
\cite[Ch. 3, \S 4, Th. 1]{Mikh}, which is a version of the so-called
`extension lemma', see also e.g. \cite[\S 3.1, L. 3.2]{ZhiKozOle}.
In particular, we can choose the following extension:
\[
\begin{aligned}
&\widetilde{u}^\varepsilon \equiv u^\varepsilon,& \,\, x\in
\Omega_1^\varepsilon \cup \Omega_2^\varepsilon,
\\
-\nabla\cdot \bigl(a(x,&\varepsilon) \nabla
\widetilde{u}^\varepsilon(x)\bigr)
 = 0,& \,\, x\in \Omega_0^\varepsilon \cup
\widetilde{\Omega}_0^\varepsilon,
\end{aligned}
\]
which minimizes $\|a^{1/2}(x,\varepsilon) \nabla
\widetilde{u}^\varepsilon \|_{L^2(\varepsilon Q_0)}$ subject to the
prescribed boundary conditions, with (\ref{4}) and (\ref{5a})
ensuring that (\ref{4.1}) still holds. From (\ref{12}) and
(\ref{4.1}) we conclude that
\begin{equation}
\| \widetilde{u}^\varepsilon \|_{H^1(\mathbb{R}^n)} \leq C.
\label{130}
\end{equation}
We represent $u^\varepsilon$ in the form
\begin{equation}
u^\varepsilon(x) = \widetilde{u}^\varepsilon(x) + v^\varepsilon(x) 
\label{18a}
\end{equation}
and consider the function $v^\varepsilon \in
H^1_0(\Omega_0^\varepsilon \cup
\widetilde{\Omega}_0^\varepsilon)$.\footnote[6]{In a sense,
(\ref{18a}) decomposes $u^\varepsilon $ into a slowly varying part
$\widetilde{u}^\varepsilon $ and rapidly varying $v^\varepsilon $.
The two are coupled and subsequently analyzed simultaneously, which
is the essence of two-scale asymptotic analysis.} In each inclusion
$\varepsilon Q_0 \subset \Omega_0^\varepsilon \cup
\widetilde{\Omega}_0^\varepsilon$ we have the following boundary
value problem for $v^\varepsilon(x)$:
\begin{equation}
\begin{aligned}
-\nabla \cdot (a(x,\varepsilon) \nabla v^\varepsilon) -
\lambda_\varepsilon v^\varepsilon =  \lambda_\varepsilon
\widetilde{u}^\varepsilon, \,\, x\in \varepsilon Q_0; \quad
v^\varepsilon(x) =& 0, \,\, x\in
\partial (\varepsilon Q_0).\label{13}
\end{aligned}
\end{equation}
When $a(x,\varepsilon) = a_0 \varepsilon^2$, i.e. everywhere in
$\Omega_0^\varepsilon$ and also in
$\widetilde{\Omega}_0^\varepsilon$ in the case
$\widetilde{a}_0(x,\varepsilon) = a_0 \varepsilon^2$, after changing
the variables $x\to y = x/\varepsilon$ we obtain
\begin{equation}
- a_0 \Delta_y v^\varepsilon(\varepsilon y) - \lambda_\varepsilon
v^\varepsilon(\varepsilon y) = \lambda_\varepsilon
\widetilde{u}^\varepsilon(\varepsilon y), \,\, y\in Q_0, \quad
v^\varepsilon(\varepsilon y) = 0, \,\, y\in
\partial Q_0.\label{4.2}
\end{equation}
Since $\lambda_0 \neq \lambda_j$ by the assumptions of the theorem,
$\lambda_\varepsilon$ is separated uniformly from the spectrum of
operator (\ref{8}) for small enough $\varepsilon $. Hence the
resolvent at $\lambda_\varepsilon$ is bounded uniformly in
$\varepsilon$ and (\ref{4.2}) implies
\begin{equation}
\|v^\varepsilon(\varepsilon y)\|_{H^1(Q_0)} \leq C \|
\widetilde{u}^\varepsilon(\varepsilon y)\|_{L^2(Q_0)}. \label{16}
\end{equation}

In the case when $\widetilde{A}_0\, \varepsilon^{2-\theta} \leq
\widetilde{a}_0(x,\varepsilon) \leq \widetilde{B}_0 \,
\varepsilon^{2-\theta}$, $\theta \in (0,2]$, we multiply equation
(\ref{13}) by $v^\varepsilon$ and integrate by parts to obtain after
rescaling
\begin{equation}
\varepsilon^{-2} \int\limits_{\varepsilon Q_0}
\widetilde{a}_0(\varepsilon y,\varepsilon) |\nabla_y
v^\varepsilon(\varepsilon y)|^2 dx - \lambda_\varepsilon
\int\limits_{\varepsilon Q_0} \big(v^\varepsilon(\varepsilon
y)\big)^2 \, dx = \lambda_\varepsilon \int\limits_{\varepsilon Q_0}
\widetilde{u}^\varepsilon(\varepsilon y)v^\varepsilon(\varepsilon
y)\, dx. \label{15}
\end{equation}
Notice that $\varepsilon^{-2} \widetilde{a}_0 (\varepsilon
y,\varepsilon) \geq \widetilde{A}_0 \varepsilon^{-\theta} \to
\infty$ as $\varepsilon \to 0$. Then using Poincar\'{e} inequality
one easily derives
\begin{equation}
\varepsilon^{-2}\|\widetilde{a}_0^{1/2} \nabla_y
v^\varepsilon(\varepsilon y)\|_{L^2(Q_0)}^2 +
\|v^\varepsilon(\varepsilon y)\|_{L^2(Q_0)}^2 \leq C \|
\widetilde{u}^\varepsilon(\varepsilon y)\|_{L^2(Q_0)}^2, \label{18}
\end{equation}
for small enough $\varepsilon$. Returning in (\ref{16}) and
(\ref{18}) to the variable $x$ we arrive at the following inequality
that describes the behaviour of $v^\varepsilon$ and its gradient in
$\Omega_0^\varepsilon \cup \widetilde{\Omega}_0^\varepsilon$,
\begin{equation}
\|a^{1/2} \nabla v^\varepsilon(x)\|^2 _{L^2(\varepsilon Q_0)} + \|
v^\varepsilon(x)\|^2 _{L^2(\varepsilon Q_0)} \leq C
\|\widetilde{u}^\varepsilon(x)\|^2_{L^2(\varepsilon Q_0)},
\label{4.6}
\end{equation}
with an $\varepsilon$-independent constant $C$.

\textbf{Step 3.} In order to get the uniform exponential decay of
the eigenfunctions we next substitute in (\ref{5}) a test function
of a special form:
\begin{equation}
w = g^2(|x|) \widetilde{u}^\varepsilon(x). \label{4.8}
\end{equation}
Here we define function $g$ as follows
\begin{equation}
g(t) = \left\{
\begin{array}{ll}
e^{\alpha t},\,\, & t\in [0,R],
\\
e^{\alpha R}, \,\, & t\in (R,+\infty),
\end{array}
\right.\label{4.9}
\end{equation}
where $R$ is some arbitrary positive number. The exponent $\alpha$
will be chosen later. This method was employed e.g. by Agmon, see
\cite{Agmon}, but in the present case its realization is not
straightforward. Namely, to obtain the desired estimates we have to
implement the approach of \cite{Agmon} in the context of the
two-scale analysis. We will show that $g(|x|)
\widetilde{u}^\varepsilon(x)$, and consequently $g(|x|)
u^\varepsilon(x)$, are bounded in $L^2(\mathbb{R}^n)$ uniformly with
respect to $R$ and $\varepsilon$. Then we will show via passing to
the limit as $R\to \infty$ that we can replace $g(|x|)$ by
$e^{\alpha |x|}$.
\begin{remark}
We cannot use $e^{2 \alpha |x|} \widetilde{u}^\varepsilon(x)$ as a
test function directly, since it is not known at this stage that
this function is square integrable.
\end{remark}
The following identity holds by direct inspection
\begin{equation}
\nabla \widetilde{u}^\varepsilon \nabla (g^2
\widetilde{u}^\varepsilon) = |\nabla(g\widetilde{u}^\varepsilon)|^2
- |\nabla g|^2 (\widetilde{u}^\varepsilon)^2. \label{24}
\end{equation}
Notice that the absolute value of $\nabla g$ is bounded by $g$ with
 $\alpha$ (uniformly in $R$):
\begin{equation}
\big| \nabla g(|x|) \big| \leq \alpha g(|x|). \label{4.11}
\end{equation}
After the substitution of (\ref{4.8}) into (\ref{5}) we have, via
(\ref{18a}) and (\ref{24}),
\begin{equation}
\begin{aligned}
\varepsilon^2 a_0 \int\limits_{\Omega_0^\varepsilon} \nabla
u^\varepsilon \cdot \nabla (g^2 \widetilde{u}^\varepsilon) \,dx +
\int\limits_{\widetilde{\Omega}_0^\varepsilon} \widetilde{a}_0
\nabla v^\varepsilon \cdot \nabla (g^2 \widetilde{u}^\varepsilon)
\,dx + \int\limits_{\mathbb{R}^n \backslash \Omega_0^\varepsilon}
a(x,\varepsilon) |\nabla (g \widetilde{u}^\varepsilon)|^2 \,dx -
\qquad
\\
- a_1 \int\limits_{\Omega_1^\varepsilon} |\nabla g|^2
(\widetilde{u}^\varepsilon)^2 \,dx - \lambda_\varepsilon
\int\limits_{\Omega_0^\varepsilon\cup\Omega_1^\varepsilon} g^2
(\widetilde{u}^\varepsilon)^2 \,dx -\lambda_\varepsilon
\int\limits_{\Omega_0^\varepsilon} g^2 v^\varepsilon
\widetilde{u}^\varepsilon \,dx  = \qquad
\\
= \lambda_\varepsilon \int\limits_{\widetilde{\Omega}_0^\varepsilon}
g^2 u^\varepsilon \widetilde{u}^\varepsilon \,dx  +
\lambda_\varepsilon \int\limits_{\Omega_2^\varepsilon} g^2
(\widetilde{u}^\varepsilon)^2 dx  +
\int\limits_{\Omega_2^\varepsilon\cup\widetilde{\Omega}_0^\varepsilon}
a(x,\varepsilon) |\nabla g|^2 (\widetilde{u}^\varepsilon)^2 \,dx.
\label{4.12}
\end{aligned}
\end{equation}
Notice that the right hand side is bounded by some constant $C$
independent of $\varepsilon$ and $R$ due to (\ref{12}), (\ref{130}),
(\ref{4.6}) and the boundedness of the domains of integration.

We employ (\ref{130}), (\ref{4.6}) and the boundedness of
$\widetilde{a}_0$ to conclude that the second term on the left hand
side of (\ref{4.12}) tends to zero (uniformly in $R$):
\begin{equation}
\begin{aligned}
\left| \, \int\limits_{\widetilde{\Omega}_0^\varepsilon}
\widetilde{a}_0 \nabla v^\varepsilon \cdot \nabla (g^2
\widetilde{u}^\varepsilon) \,dx \right| \leq C
\|\widetilde{u}^\varepsilon
\|_{L^2(\widetilde{\Omega}_0^\varepsilon)}  \rightarrow 0,
\label{26}
\end{aligned}
\end{equation}
as follows. Let us take an arbitrary subsequence
$\widetilde{u}^\varepsilon$. Since $\| \widetilde{u}^\varepsilon
\|_{H^1(\mathbb{R}^n)}$ is bounded uniformly in $\varepsilon$, see
(\ref{130}), the set of functions $\widetilde{u}^\varepsilon$ is
weakly compact in $H^1(B_R)$, hence strongly compact in $L^2(B_R)$
for any $R$; we take $R$ large enough so that $\Omega_2
\subset\subset B_R$. Then there exists further subsequence
$\widetilde{u}^\varepsilon$ that converges to some function $u_0$
strongly in $L^2(B_R)$. Then
\[
\|\widetilde{u}^\varepsilon
\|_{L^2(\widetilde{\Omega}_0^\varepsilon)} \leq  \| u_0
\|_{L^2(\widetilde{\Omega}^\varepsilon_0)} +  \|
\widetilde{u}^\varepsilon - u_0
\|_{L^2(\widetilde{\Omega}^\varepsilon_0)} \rightarrow 0
\]
as Lebesgue measure of the set $\widetilde{\Omega}^\varepsilon_0$
tends to zero. Since we have chosen in the beginning an arbitrary
subsequence $\widetilde{u}^\varepsilon$, (\ref{26}) follows. From
(\ref{4.6}) and (\ref{26}) we also obtain
\begin{equation}
\|v^\varepsilon\|_{L^2(\widetilde{\Omega}_0^\varepsilon)}
\rightarrow 0. \label{27}
\end{equation}

\textbf{Step 4.} The following Lemma approximates and bounds the
last and the first terms (both in a sense of a `two-scale' nature)
on the left hand side of (\ref{4.12}).
\begin{lemma}\label{lemma} There exists $\varepsilon_0 > 0$ such
that for all positive $\varepsilon < \varepsilon_0$ the following
estimates are valid
\begin{equation}
\begin{aligned}
\left|\lambda_\varepsilon \int\limits_{\Omega_0^\varepsilon} g^2
v^\varepsilon \widetilde{u}^\varepsilon \,dx -
(\beta(\lambda_\varepsilon) - \lambda_\varepsilon)
\int\limits_{\Omega_0^\varepsilon\cup\Omega_1^\varepsilon} g^2
(\widetilde{u}^\varepsilon)^2 \,dx \right| \leq
\\
\leq C \, \varepsilon  \left( \left\| \nabla(g
\widetilde{u}^\varepsilon) \right\|_{L^2(\Omega_1^\varepsilon)}^2 +
\| g
\widetilde{u}^\varepsilon\|_{L^2(\Omega_0^\varepsilon\cup\Omega_1^\varepsilon)}^2
\right)+ C, \label{4.13}
\end{aligned}
\end{equation}
and
\begin{equation}
\left| \varepsilon^2 a_0 \int\limits_{\Omega_0^\varepsilon} \nabla
u^\varepsilon \nabla (g^2 \widetilde{u}^\varepsilon) \,dx \right|
\leq C \, \varepsilon  \left( \| \nabla(g \widetilde{u}^\varepsilon)
\|_{L^2(\Omega_1^\varepsilon)}^2 + \| g
\widetilde{u}^\varepsilon\|_{L^2(\Omega_0^\varepsilon\cup\Omega_1^\varepsilon)}^2+
C \right), \label{4.14}
\end{equation}
where $C$ does not depend on $\varepsilon$ and $R$.
\end{lemma}
The proof of this lemma is quite technical and we give it in the
next section. We make use of Lemma \ref{lemma} and convergence
(\ref{26}) to transform identity (\ref{4.12}) into the following
inequality, valid for small enough $\varepsilon$:
\[
\begin{aligned}
& a_1 \| \nabla(g
\widetilde{u}^\varepsilon)\|_{L^2(\Omega_1^\varepsilon)}^2 - a_1 \|
(\nabla g) \widetilde{u}^\varepsilon\|_{L^2(\Omega_1^\varepsilon)}^2
- \beta(\lambda_\varepsilon) \| g \widetilde{u}^\varepsilon
\|_{L^2(\Omega_0^\varepsilon\cup\Omega_1^\varepsilon)}^2 -
\\
& - 2\delta  \left( \| \nabla(g \widetilde{u}^\varepsilon)
\|_{L^2(\Omega_1^\varepsilon)}^2 + \| g
\widetilde{u}^\varepsilon\|_{L^2(\Omega_0^\varepsilon\cup\Omega_1^\varepsilon)}^2\right)
\leq C,
\end{aligned}
\]
where C is independent of $\varepsilon$ and $R$. Notice that
$\beta(\lambda_\varepsilon)$ is negative and uniformly bounded away
from zero as $\lambda _\varepsilon \to \lambda _0$. Applying
(\ref{4.11}) to the second term on the left hand side we arrive at
\begin{equation}
(a_1 - 2\delta) \| \nabla(g
\widetilde{u}^\varepsilon)\|_{L^2(\Omega_1^\varepsilon)}^2 +
\left(-\beta(\lambda_\varepsilon)- \alpha^2 a_1 - 2\delta \right)
\| g \widetilde{u}^\varepsilon
\|_{L^2(\Omega_0^\varepsilon\cup\Omega_1^\varepsilon)}^2 \leq C,
\label{4.15}
\end{equation}
where $\delta>0$ could be chosen arbitrarily small. Hence we should
choose $\alpha$ such that  $-\beta(\lambda_0)- \alpha^2 a_1$ is
positive, i.e.
\[
\alpha < \sqrt{-\beta(\lambda_0)/a_1}.
\]
Since $g(|x|)$ coincides with $e^{\alpha|x|}$ on the ball $B_R$,
taking $\delta$ small enough and restricting the $L^2$-norms to
$B_R$ we arrive at
\[
\begin{aligned}
\left\| e^{\alpha|x|} \widetilde{u}^\varepsilon \right\|_{L^2(B_R)}
\leq C
\end{aligned}
\]
uniformly for small enough $\varepsilon$. Then passing to the limit
as $R\to \infty$ we obtain
\begin{equation}
\begin{aligned}
\left\| e^{\alpha|x|} \widetilde{u}^\varepsilon
\right\|_{L^2(\mathbb{R}^n)} \leq C. \label{4.16}
\end{aligned}
\end{equation}

\textbf{Step 5.} Now we easily get the same estimate for the
function $u^\varepsilon$:
\[
\begin{aligned}
\left\| e^{\alpha|x|} u^\varepsilon \right\|_{L^2(\mathbb{R}^n)}
\leq \left\| e^{\alpha|x|} \widetilde{u}^\varepsilon
\right\|_{L^2(\mathbb{R}^n)} + \sum_{\varepsilon Q_0 \subset
\Omega_0^\varepsilon\cup\widetilde{\Omega}_0^\varepsilon} \left\|
e^{\alpha|x|} v^\varepsilon \right\|_{L^2(\varepsilon Q_0)}.
\end{aligned}
\]
In each cell we use inequality (\ref{4.6}) and
\[
\sup_{x'\in \varepsilon Q} e^{\alpha |x'|} \leq e^{\alpha
\sqrt{n}\varepsilon} e^{\alpha |x|}, \qquad \forall x\in \varepsilon
Q,
\]
to obtain
\[
\left\| e^{\alpha|x|} v^\varepsilon \right\|_{L^2(\varepsilon Q_0)}
\leq C e^{\alpha \sqrt{n} \varepsilon} \left\| e^{\alpha|x|}
\widetilde{u}^\varepsilon \right\|_{L^2(\varepsilon Q_0)}  \leq C
\left\| e^{\alpha|x|} \widetilde{u}^\varepsilon
\right\|_{L^2(\varepsilon Q_0)},
\]
and hence, finally,
\[
\left\| e^{\alpha|x|} u^\varepsilon \right\|_{L^2(\mathbb{R}^n)}
\leq C
\]
uniformly in $\varepsilon$.
\end{proof}

\begin{remark} From (\ref{4.1}), (\ref{4.11}) and (\ref{4.15}) it also follows
that the gradient of $\widetilde{u}^\varepsilon$ decays
exponentially at infinity,
\begin{equation}
\begin{aligned}
\left\| e^{\alpha|x|} \nabla \widetilde{u}^\varepsilon
\right\|_{L^2(\mathbb{R}^n )} \leq C \label{28}
\end{aligned}
\end{equation}
uniformly in $\varepsilon$.
\end{remark}

\begin{remark} Estimate (\ref{15a}) is sharp in a sense. As we will
show later, $u_\varepsilon $ strongly two-scale converges to $u_0$,
for which $\sqrt{-\beta(\lambda_0)/a_1}$ is the optimal estimate for
its decay exponent, cf. (\ref{92a}).
\end{remark}

\section{Proof of Lemma \ref{lemma}.}\label{s4}
\begin{proof}

\textbf{Step 1.} First we decompose the function $v^\varepsilon $ in
$\Omega_0^\varepsilon $ into the sum of two functions:
\begin{equation}
v^\varepsilon = \widetilde{v}^\varepsilon + \widehat{v}^\varepsilon,
\end{equation}
solving the following equations (cf. (\ref{4.2})):
\begin{equation}
- a_0 \Delta_y \widetilde{v}^\varepsilon(\varepsilon y) -
\lambda_\varepsilon \widetilde{v}^\varepsilon(\varepsilon y) =
\lambda_\varepsilon \langle\widetilde{u}^\varepsilon(\varepsilon
y)\rangle_y, \,\, y\in Q_0, \quad
\widetilde{v}^\varepsilon(\varepsilon y) = 0, \,\, y\in
\partial Q_0, \label{39}
\end{equation}
\begin{equation}
- a_0 \Delta_y \widehat{v}^\varepsilon(\varepsilon y) -
\lambda_\varepsilon \widehat{v}^\varepsilon(\varepsilon y) =
\lambda_\varepsilon \left(\widetilde{u}^\varepsilon(\varepsilon y) -
\langle\widetilde{u}^\varepsilon(\varepsilon y)\rangle_y \right),
\,\, y\in Q_0, \quad \widehat{v}^\varepsilon(\varepsilon y) = 0,
\,\, y\in
\partial Q_0.  \label{40}
\end{equation}
The solution of (\ref{39}) could by presented in the form
\begin{equation}
\widetilde{v}^\varepsilon (\varepsilon y) = \lambda_\varepsilon
\langle\widetilde{u}^\varepsilon \rangle_y b_\varepsilon(y),
\label{41}
\end{equation}
where $b_\varepsilon $ is a solution of (\ref{13a}) with $\lambda =
\lambda_\varepsilon$. Due to the uniform (with respect to
$\varepsilon $) boundedness of the resolvent of the operator $T$ in
the neighborhood of $\lambda _0$, the solution of (\ref{40}) is
bounded as follows,
\begin{equation*}
\left\|\widehat{v}^\varepsilon (\varepsilon y)\right\|_{H^1(Q_0)}
\leq C \left\|\widetilde{u}^\varepsilon(\varepsilon y) -
\langle\widetilde{u}^\varepsilon\rangle_y \right\|_{L^2(Q_0)} \leq C
\left\|\nabla_y \widetilde{u}^\varepsilon \right\|_{L^2(Q_0)},
\end{equation*}
here we also employed the Poincar\'{e} inequality. In particular
\begin{equation}\label{43}
\|\widehat{v}^\varepsilon (x)\|_{L^2(\varepsilon Q_0)}  \leq
\varepsilon \, C \|\nabla
\widetilde{u}^\varepsilon(x)\|_{L^2(\varepsilon Q)},
\end{equation}
where $C$ in the inequality does not depend on $\varepsilon $ or
$\xi \in \mathbb Z^n$.

\textbf{Step 2.} At this stage we will need several inequalities
which follow from the properties of $g$ and
$\widetilde{u}^\varepsilon$.
\begin{proposition}\label{prop}The following estimates are valid for small enough
$\varepsilon$ with constants independent of $\,\varepsilon$ and the
choice of particular $\,\varepsilon Q$:
\begin{equation}
\begin{aligned}
\left\| g^2 \widetilde{u}^\varepsilon\right\|_{L^2(\varepsilon Q)}
\| \nabla \widetilde{u}^\varepsilon\|_{L^2(\varepsilon Q)} \leq  C
\left( \| \nabla(g \widetilde{u}^\varepsilon) \|_{L^2(\varepsilon
Q_1)}^2 + \| g \widetilde{u}^\varepsilon\|_{L^2(\varepsilon Q)}^2
\right), \label{n4.20}
\end{aligned}
\end{equation}
\begin{equation}
\begin{aligned}
\| \widetilde{u}^\varepsilon\|_{L^2(\varepsilon Q)} \left\| \nabla
(g^2 \widetilde{u}^\varepsilon)\right\|_{L^2(\varepsilon Q)} \leq  C
\left( \| \nabla(g \widetilde{u}^\varepsilon) \|_{L^2(\varepsilon
Q_1)}^2 + \| g \widetilde{u}^\varepsilon\|_{L^2(\varepsilon Q)}^2
\right),\label{n4.21}
\end{aligned}
\end{equation}
\begin{equation}
\begin{aligned}
\| \nabla \widetilde{u}^\varepsilon\|_{L^2(\varepsilon Q)} \left\|
\nabla (g^2 \widetilde{u}^\varepsilon)\right\|_{L^2(\varepsilon Q)}
\leq C \left( \| \nabla(g \widetilde{u}^\varepsilon)
\|_{L^2(\varepsilon Q_1)}^2 + \| g
\widetilde{u}^\varepsilon\|_{L^2(\varepsilon Q)}^2
\right).\label{n4.22}
\end{aligned}
\end{equation}
\end{proposition}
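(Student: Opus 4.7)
The plan is to combine three elementary ingredients. First, within each cell $\varepsilon Q$ the weight $g$ is nearly constant at the small scale: for any $x, x' \in \varepsilon Q$,
\begin{equation*}
g(|x|)/g(|x'|) \leq e^{\alpha\sqrt{n}\,\varepsilon},
\end{equation*}
uniformly bounded for $\varepsilon < \varepsilon_0$. Hence, letting $g_*$ denote any pointwise value of $g$ on $\varepsilon Q$, we have $\|g^s f\|_{L^2(\varepsilon Q)} \leq C g_*^s \|f\|_{L^2(\varepsilon Q)}$ and the reverse, with $C$ depending only on $\alpha$ and $\varepsilon_0$. Second, since $\widetilde{u}^\varepsilon = u^\varepsilon$ on $\varepsilon Q_1$, the extension estimate (\ref{4.1}) yields
\begin{equation*}
\|\widetilde{u}^\varepsilon\|_{L^2(\varepsilon Q)} \leq C \|\widetilde{u}^\varepsilon\|_{L^2(\varepsilon Q_1)}, \qquad \|\nabla \widetilde{u}^\varepsilon\|_{L^2(\varepsilon Q)} \leq C \|\nabla \widetilde{u}^\varepsilon\|_{L^2(\varepsilon Q_1)},
\end{equation*}
with $C$ uniform in $\varepsilon$ and in the cell. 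Third, the product rule $g \nabla \widetilde{u}^\varepsilon = \nabla(g \widetilde{u}^\varepsilon) - (\nabla g) \widetilde{u}^\varepsilon$ combined with (\ref{4.11}) gives
\begin{equation*}
\|g \nabla \widetilde{u}^\varepsilon\|_{L^2(\varepsilon Q_1)}^2 \leq 2\|\nabla(g \widetilde{u}^\varepsilon)\|_{L^2(\varepsilon Q_1)}^2 + 2\alpha^2 \|g \widetilde{u}^\varepsilon\|_{L^2(\varepsilon Q_1)}^2 .
\end{equation*}

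With these in hand, (\ref{n4.20}) is proved by factoring $g_*^2$ out of $\|g^2\widetilde{u}^\varepsilon\|_{L^2(\varepsilon Q)}$, applying AM-GM to the resulting product $g_*^2 \|\widetilde{u}^\varepsilon\|_{L^2(\varepsilon Q)} \cdot \|\nabla\widetilde{u}^\varepsilon\|_{L^2(\varepsilon Q)}$, then invoking the extension estimates to restrict to $\varepsilon Q_1$ and reabsorbing $g_*$ inside the norms. This produces a bound by $C(\|g\widetilde{u}^\varepsilon\|_{L^2(\varepsilon Q_1)}^2 + \|g\nabla\widetilde{u}^\varepsilon\|_{L^2(\varepsilon Q_1)}^2)$; the third ingredient above converts the gradient term into the RHS of (\ref{n4.20}), and $\|g\widetilde{u}^\varepsilon\|_{L^2(\varepsilon Q_1)}^2 \leq \|g\widetilde{u}^\varepsilon\|_{L^2(\varepsilon Q)}^2$ absorbs the remainder. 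For (\ref{n4.21}) and (\ref{n4.22}) I would first expand $\nabla(g^2\widetilde{u}^\varepsilon) = 2g(\nabla g)\widetilde{u}^\varepsilon + g^2\nabla\widetilde{u}^\varepsilon$ and use (\ref{4.11}) to bound
\begin{equation*}
|\nabla(g^2 \widetilde{u}^\varepsilon)| \leq 2\alpha g^2 |\widetilde{u}^\varepsilon| + g^2 |\nabla \widetilde{u}^\varepsilon|.
\end{equation*}
Both (\ref{n4.21}) and (\ref{n4.22}) then reduce to pairings of norms of $\widetilde{u}^\varepsilon$ and $\nabla\widetilde{u}^\varepsilon$ with a common $g_*^2$ weight, which are treated exactly as in (\ref{n4.20}).

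The main technical obstacle is the bookkeeping of the powers of $g_*$: for the pattern to close, the two $L^2(\varepsilon Q)$ norms on the LHS of each inequality must together carry a weight $g_*^2$, so that after AM-GM the resulting squares each carry $g_*^2$ and can be identified with $\|g\,\cdot\|_{L^2}^2$ quantities. The specific powers of $g$ appearing on the left-hand sides of (\ref{n4.20})--(\ref{n4.22}) are precisely those that satisfy this constraint. A subsidiary point is that the extension lemma is applied only to $\widetilde{u}^\varepsilon$ and $\nabla\widetilde{u}^\varepsilon$, without any weight, which is why the near-constancy of $g$ across each cell is essential to carry the weight through the estimate uniformly in $\varepsilon$ and in the choice of cell.
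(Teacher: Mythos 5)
Your argument is correct and is essentially the paper's own proof: both rest on the near-constancy of $g$ across a cell (the paper's estimate (\ref{4.32})), the extension bound (\ref{4.1}), the product rule $g\nabla\widetilde{u}^\varepsilon=\nabla(g\widetilde{u}^\varepsilon)-(\nabla g)\widetilde{u}^\varepsilon$ with (\ref{4.11}), and Young's inequality. Your factoring out of a fixed value $g_*$ is only a cosmetic variant of the paper's direct shifting of one power of $g$ between the two factors, so there is nothing substantive to add.
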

\begin{proof} Notice that
\begin{equation}
\begin{aligned}
\sup_{\varepsilon Q} g \leq e^{\alpha \sqrt{n}\varepsilon} g(x),
\qquad x\in \varepsilon Q. \label{4.32}
\end{aligned}
\end{equation}
We apply (\ref{4.1}), (\ref{4.11}) and (\ref{4.32}) to get
(\ref{n4.20}):
\[
\begin{aligned}
\left\| g^2 \widetilde{u}^\varepsilon\right\|_{L^2(\varepsilon Q)}
\left\| \nabla \widetilde{u}^\varepsilon\right\|_{L^2(\varepsilon
Q)} \leq C \left\| g^2
\widetilde{u}^\varepsilon\right\|_{L^2(\varepsilon Q)} \left\|
\nabla \widetilde{u}^\varepsilon\right\|_{L^2(\varepsilon Q_1)} \leq
C \left\| g \widetilde{u}^\varepsilon\right\|_{L^2(\varepsilon Q)}
\left\| g \nabla \widetilde{u}^\varepsilon\right\|_{L^2(\varepsilon
Q_1)} =
\\
= C \left\| g \widetilde{u}^\varepsilon\right\|_{L^2(\varepsilon Q)}
\left\| \nabla(g \widetilde{u}^\varepsilon) - (\nabla g)
\widetilde{u}^\varepsilon \right\|_{L^2(\varepsilon Q_1)} \leq C
\left( \left\| g \widetilde{u}^\varepsilon\right\|_{L^2(\varepsilon
Q)} \left\| \nabla(g \widetilde{u}^\varepsilon)
\right\|_{L^2(\varepsilon Q_1)} + \left\| g
\widetilde{u}^\varepsilon\right\|_{L^2(\varepsilon Q)}^2 \right)
\leq
\\
 \leq C \left( \left\| \nabla(g \widetilde{u}^\varepsilon)
\right\|_{L^2(\varepsilon Q_1)}^2 + \left\| g
\widetilde{u}^\varepsilon\right\|_{L^2(\varepsilon Q)}^2 \right).
\end{aligned}
\]
The proof of (\ref{n4.21}) and (\ref{n4.22}) is analogous.
\end{proof}

Let us show that the entity $\int\limits_{\Omega_0^\varepsilon} g^2
\widehat{v}^\varepsilon \widetilde{u}^\varepsilon \,dx$ is
relatively small (compared to the first term on the right hand side
of (\ref{4.13})). Indeed, applying inequalities (\ref{43}) and
(\ref{n4.20}) in each cell we obtain
\begin{equation}\label{48}
\int\limits_{\Omega_0^\varepsilon} g^2 \widehat{v}^\varepsilon
\widetilde{u}^\varepsilon \,dx \leq  \sum_{\varepsilon Q_0 \subset
\Omega_0^\varepsilon} \left\|g^2 \widetilde{u}^\varepsilon
\right\|_{L^2(\varepsilon Q_0)}
\|\widehat{v}^\varepsilon\|_{L^2(\varepsilon Q_0)} \leq \sum
\varepsilon \, C \left( \| \nabla(g \widetilde{u}^\varepsilon)
\|_{L^2(\varepsilon Q_1)}^2 + \| g
\widetilde{u}^\varepsilon\|_{L^2(\varepsilon Q)}^2 \right).
\end{equation}
Considering sets
\[
\bigcup_{\varepsilon Q_0 \subset \Omega_0^\varepsilon} \varepsilon
Q\qquad \textrm{ and } \qquad \bigcup_{\varepsilon Q_0 \subset
\Omega_0^\varepsilon} \varepsilon Q_1,
\]
one can notice that they are ``nearly'' equal to
\[
\Omega_0^\varepsilon\cup\Omega_1^\varepsilon \qquad \textrm{ and }
\qquad \Omega_1^\varepsilon,
\]
respectively. Namely,
\[
\begin{aligned}
\Omega_0^\varepsilon\cup\Omega_1^\varepsilon = \left(
\bigcup_{\varepsilon Q_0 \subset \Omega_0^\varepsilon} \varepsilon Q
\right) \cup \Omega_{1,+}^\varepsilon \setminus
\Omega_{1,-}^\varepsilon,
\\
\Omega_1^\varepsilon = \left( \bigcup_{\varepsilon Q_0 \subset
\Omega_0^\varepsilon} \varepsilon Q_1 \right) \cup
\Omega_{1,+}^\varepsilon \setminus \Omega_{1,-}^\varepsilon,
\end{aligned}
\]
where
\[
\begin{aligned}
\Omega_{1,-}^\varepsilon = \bigcup_{\varepsilon Q_0 \subset
\Omega_0^\varepsilon} \varepsilon Q \cap \Omega_2,
\\
\Omega_{1,+}^\varepsilon = \bigcup_{\varepsilon Q_0 \cap \Omega_2
\neq \emptyset} \varepsilon Q \cap \Omega_1^\varepsilon.
\end{aligned}
\]
We introduce two `correctors'
\[
r^\varepsilon = \| \nabla(g \widetilde{u}^\varepsilon)
\|_{L^2(\Omega_{1,-}^\varepsilon)}^2 + \| g
\widetilde{u}^\varepsilon\|_{L^2(\Omega_{1,-}^\varepsilon)}^2,
\]
and
\[
r_1^\varepsilon = \| g
\widetilde{u}^\varepsilon\|_{L^2(\Omega_{1,+}^\varepsilon \cup
\Omega_{1,-}^\varepsilon) }^2.
\]
Then inequality (\ref{48}) transforms into
\begin{equation}\label{49}
\int\limits_{\Omega_0^\varepsilon} g^2 \widehat{v}^\varepsilon
\widetilde{u}^\varepsilon \,dx \leq \varepsilon C \left( \| \nabla(g
\widetilde{u}^\varepsilon) \|_{L^2(\Omega_1^\varepsilon)}^2 + \| g
\widetilde{u}^\varepsilon\|_{L^2(\Omega_0^\varepsilon\cup\Omega_1^\varepsilon)}^2
+ r^\varepsilon \right).
\end{equation}

\textbf{Step 3.} Now we consider the term
$\int\limits_{\Omega_0^\varepsilon} g^2 \widetilde{v}^\varepsilon
\widetilde{u}^\varepsilon \,dx$ (cf. (\ref{4.13})) using also
(\ref{41}) and (\ref{11}):
\begin{equation}\label{50}
\begin{aligned}
&\left|\lambda_\varepsilon \int\limits_{\Omega_0^\varepsilon} g^2
\widetilde{v}^\varepsilon \widetilde{u}^\varepsilon \,dx -
(\beta(\lambda_\varepsilon) - \lambda_\varepsilon)
\int\limits_{\Omega_0^\varepsilon\cup\Omega_1^\varepsilon} g^2
(\widetilde{u}^\varepsilon)^2 \,dx \right| \leq
\\
&\leq C \varepsilon^n \sum_{\varepsilon Q_0 \subset
\Omega_0^\varepsilon} \left| \int\limits_Q g^2
\widetilde{u}^\varepsilon (\varepsilon y) b_\varepsilon(y) \langle
\widetilde{u}^\varepsilon \rangle_y \,dy - \langle b_\varepsilon
\rangle_y \int\limits_Q g^2(\varepsilon y)
(\widetilde{u}^\varepsilon(\varepsilon y))^2 \,dy \right| + C\,
r^\varepsilon_1 \leq
\\
&\leq C \varepsilon^n \sum \left| \langle \widetilde{u}^\varepsilon
\rangle_y \int\limits_Q \left(g^2 \widetilde{u}^\varepsilon -
\langle g^2 \widetilde{u}^\varepsilon \rangle_y \right)
b_\varepsilon \,dy\right| + \left| \langle b_\varepsilon \rangle_y
\int\limits_Q \left(g^2 \widetilde{u}^\varepsilon - \langle g^2
\widetilde{u}^\varepsilon \rangle_y \right)
\widetilde{u}^\varepsilon \,dy \right| + C\, r^\varepsilon_1.
\end{aligned}
\end{equation}
Notice that the mean value of $\widetilde{u}^\varepsilon$ is bounded
by its norm in $L^2$
\begin{equation}
\left|\langle \widetilde{u}^\varepsilon(\varepsilon y)
\rangle_y\right| = \left| \int\limits_{Q} \widetilde{u}^\varepsilon
\,dy \right| \leq \|\widetilde{u}^\varepsilon(x)\|_{L^2(Q)}.
\label{51}
\end{equation}
Similarly,
\begin{equation}
\langle b_\varepsilon \rangle_y \leq
\|b_\varepsilon \|_{L^2(Q_0)} \leq C, 
\end{equation}
where $C$ does not depend on $\varepsilon $ due to the uniform
boundedness of $(T-\lambda )^{-1}$ in the neighborhood of
$\lambda_0$. Via the Poincar\'{e} inequality we derive
\begin{equation}
\left|\int\limits_Q \left(g^2 \widetilde{u}^\varepsilon - \langle
g^2 \widetilde{u}^\varepsilon \rangle_y \right)
\widetilde{u}^\varepsilon \,dy \right| \leq \left\| g^2
\widetilde{u}^\varepsilon - \langle g^2 \widetilde{u}^\varepsilon
\rangle_y \right\|_{L^2(Q)} \left\| \widetilde{u}^\varepsilon
\right\|_{L^2(Q)} \leq C \left\|\nabla_y \left( g^2
\widetilde{u}^\varepsilon \right) \right\|_{L^2(Q)} \left\|
\widetilde{u}^\varepsilon \right\|_{L^2(Q)},
\end{equation}
and
\begin{equation}\label{54a}
\left|\int\limits_Q \left(g^2 \widetilde{u}^\varepsilon - \langle
g^2 \widetilde{u}^\varepsilon \rangle_y \right) b_\varepsilon \,dy
\right| \leq C \left\|\nabla_y \left( g^2 \widetilde{u}^\varepsilon
\right) \right\|_{L^2(Q)},
\end{equation}
with constants independent of $\varepsilon $ and $\xi$. Applying
inequalities (\ref{51})--(\ref{54a}) and then (\ref{n4.21}) to
(\ref{50}) we arrive at
\begin{equation}\label{55}
\begin{aligned}
\left|\lambda_\varepsilon \int\limits_{\Omega_0^\varepsilon} g^2
\widetilde{v}^\varepsilon \widetilde{u}^\varepsilon \,dx +
(\lambda_\varepsilon - \beta(\lambda_\varepsilon))
\int\limits_{\Omega_0^\varepsilon\cup\Omega_1^\varepsilon} g^2
(\widetilde{u}^\varepsilon)^2 \,dx \right| \leq
\\
\leq \varepsilon \,C \left( \| \nabla(g \widetilde{u}^\varepsilon)
\|_{L^2(\Omega_1^\varepsilon)}^2 + \| g
\widetilde{u}^\varepsilon\|_{L^2(\Omega_0^\varepsilon\cup\Omega_1^\varepsilon)}^2
+ r^\varepsilon \right) + C\, r^\varepsilon_1,
\end{aligned}
\end{equation}
where $C$ is $\varepsilon $-independent. Since the correctors
$r^\varepsilon, \, r^\varepsilon_1 $ are uniformly bounded,
inequalities (\ref{49}) and (\ref{55}) together imply the validity
of (\ref{4.13}).

\textbf{Step 4.} Finally, it is not difficult to obtain similarly
(\ref{4.14}) via (\ref{4.6}), (\ref{n4.21}) and (\ref{n4.22}):
\[
\begin{aligned}
&\left| \varepsilon^2 a_0 \int\limits_{\Omega_0^\varepsilon} \nabla
u^\varepsilon \nabla (g^2 \widetilde{u}^\varepsilon) \,dx \right|
\leq \varepsilon^2 C  \sum_{Q_0:\varepsilon Q_0 \subset
\Omega_0^\varepsilon} \| \nabla u^\varepsilon\|_{L^2(\varepsilon
Q_0)} \left\| \nabla(g^2 \widetilde{u}^\varepsilon)
\right\|_{L^2(\varepsilon Q_0)} \leq
\\
\leq &\varepsilon C  \sum_{Q_0} \big( \|\varepsilon \nabla
v^\varepsilon\|_{L^2(\varepsilon Q_0)} + \varepsilon \| \nabla
\widetilde{u}^\varepsilon\|_{L^2(\varepsilon Q_0)} \big) \left\|
\nabla(g^2 \widetilde{u}^\varepsilon) \right\|_{L^2(\varepsilon
Q_0)} \leq
\\
\leq & \varepsilon C \sum_{Q_0} \left( \|
\widetilde{u}^\varepsilon\|_{L^2(\varepsilon Q_0)}\left\| \nabla(g^2
\widetilde{u}^\varepsilon) \right\|_{L^2(\varepsilon Q_0)} +
\varepsilon \| \nabla \widetilde{u}^\varepsilon\|_{L^2(\varepsilon
Q_0)} \left\| \nabla(g^2 \widetilde{u}^\varepsilon)
\right\|_{L^2(\varepsilon Q_0)} \right) \leq
\\
\leq & \delta  \left( \| \nabla(g \widetilde{u}^\varepsilon)
\|_{L^2(\Omega_1^\varepsilon)}^2 + \| g
\widetilde{u}^\varepsilon\|_{L^2(\Omega_0^\varepsilon\cup\Omega_1^\varepsilon)}^2+
r^\varepsilon \right) \leq  \delta  \left( \| \nabla(g
\widetilde{u}^\varepsilon) \|_{L^2(\Omega_1^\varepsilon)}^2 + \| g
\widetilde{u}^\varepsilon\|_{L^2(\Omega_0^\varepsilon\cup\Omega_1^\varepsilon)}^2+
C \right)
\end{aligned}
\]
for small enough $\varepsilon $.

Notice that all the estimates obtained in this section are
independent of $R$.
\end{proof}

\section{Some properties of two-scale convergence}\label{s5}
In this section we list the definitions and some properties of the
two-scale convergence, see \cite{Al, Ngu, Zhikov2, Zhikov1}. We also
formulate several statements (analogous to those in \cite{Zhikov2})
which are necessary for obtaining the two-scale convergence of the
eigenfunctions of $A_\varepsilon$ and derivation of the limit
equation.

Let $\Omega$ be an arbitrary region in $\mathbb{R}^n$, in particular
$\Omega = \mathbb{R}^n$. Denote by $\square$ the unit cube
$[0,1)^n$. We consider all functions of the form $u(x,y)$ to be
1-periodic in $y$ in each coordinate.

\begin{definition} We say that bounded in
$L^2(\Omega)$ sequence $v_\varepsilon$ is weakly two-scale
convergent to a function $v\in L^2(\Omega\times \square)$,
$v_\varepsilon(x) \stackrel{2}\rightharpoonup v(x,y)$, if
\[
\lim_{\varepsilon\to 0} \int\limits_{\Omega} v_\varepsilon(x)
\varphi (x) b \left(\frac{x}{\varepsilon}\right) \,dx =
\int\limits_{\Omega}\int\limits_{\square} v(x,y) \varphi(x) b(y)
\,dy dx
\]
for all $\varphi \in C_0^\infty(\Omega)$ and all $b\in
C^\infty_{\mathrm{per}}(\square)$ (where
$C^\infty_{\mathrm{per}}(\square)$ is the set of 1-periodic
functions from $C^\infty(\mathbb{R}^n)$).
\end{definition}

\begin{definition} We say that a
bounded in $L^2(\Omega)$ sequence $u_\varepsilon$ is strongly
two-scale convergent to a function $u \in L^2(\Omega\times
\square)$, $u_\varepsilon(x) \stackrel{2}\rightarrow u(x,y)$, if
\[
\lim_{\varepsilon\to 0} \int\limits_{\Omega} u_\varepsilon(x)
v_\varepsilon(x) \,dx = \int\limits_{\Omega}\int\limits_{\square}
u(x,y) v(x,y) \,dy \,dx
\]
for all $v_\varepsilon(x) \stackrel{2}\rightharpoonup v(x,y)$.
\end{definition}

\begin{proposition}(Properties of the two-scale convergence.) \label{p5.3}
\\
(i) If $u_\varepsilon(x) \stackrel{2}\rightharpoonup u(x,y)$ and
$a\in L^\infty_{\mathrm{per}}(\square)$ then
\[
a(x/\varepsilon) u_\varepsilon(x) \stackrel{2}\rightharpoonup a(y)
u(x,y).
\]
\\
(ii) $v_\varepsilon(x) \stackrel{2}\rightarrow v(x,y)$ if and only
if $v_\varepsilon(x) \stackrel{2}\rightharpoonup v(x,y)$ and
\[
\lim_{\varepsilon\to 0} \int\limits_{\Omega} v_\varepsilon^2 \,dx =
\int\limits_{\Omega}\int\limits_{\square} v^2 \,dy \,dx.
\]
\\
(iii) If $f_\varepsilon(x) \rightarrow f(x)$ in $L^2(\Omega)$, then
$f_\varepsilon(x) \stackrel{2}\rightarrow f(x)$.
\end{proposition}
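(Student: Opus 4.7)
The plan is to treat the three parts in turn: (i) and (iii) reduce to density arguments and the standard weak convergence $b(\cdot/\varepsilon) \rightharpoonup \langle b\rangle$, while the substance of the proposition is (ii), whose ``if'' direction is the main obstacle.

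For (i), the obstacle is that $a \in L^\infty_{\mathrm{per}}(\square)$ is merely essentially bounded, so the product $\varphi(x) a(x/\varepsilon) b(x/\varepsilon)$ is not directly admissible as a test function in the definition of $u_\varepsilon \stackrel{2}\rightharpoonup u$. The plan is to approximate $a$ in $L^2(\square)$ by $a_n \in C^\infty_{\mathrm{per}}(\square)$ with $\|a_n\|_\infty \leq \|a\|_\infty$ (by mollification plus truncation). For each $n$, $a_n b \in C^\infty_{\mathrm{per}}$, so applying the definition of weak two-scale convergence with this test function gives the analogue of the claim with $a_n$ in place of $a$. The remainder
\[
\int_\Omega (a - a_n)(x/\varepsilon)\, u_\varepsilon(x)\, \varphi(x)\, b(x/\varepsilon)\,dx
\]
is bounded via Cauchy--Schwarz by $\|u_\varepsilon\|_{L^2}\,\|\varphi b\|_\infty\,\|(a - a_n)(\cdot/\varepsilon)\|_{L^2(\mathrm{supp}\,\varphi)}$, and the last factor tends to $|\mathrm{supp}\,\varphi|^{1/2}\|a-a_n\|_{L^2(\square)}$ as $\varepsilon\to 0$ by periodicity averaging; sending $n\to\infty$ closes (i), and an analogous bound controls the limit integral on the right-hand side.

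For the ``only if'' direction of (ii), I would feed two particular weakly two-scale convergent sequences into the definition of strong two-scale convergence of $v_\varepsilon$. First, $u_\varepsilon(x) = \varphi(x) b(x/\varepsilon)$ is bounded in $L^2(\Omega)$ and satisfies $u_\varepsilon \stackrel{2}\rightharpoonup \varphi(x) b(y)$ by direct averaging, so testing against it recovers $v_\varepsilon \stackrel{2}\rightharpoonup v$; second, with weak two-scale convergence of $v_\varepsilon$ in hand, $v_\varepsilon$ itself may be used as a test sequence, yielding $\|v_\varepsilon\|^2 \to \|v\|^2$. The main obstacle is the ``if'' direction. I would approximate $v$ in $L^2(\Omega\times\square)$ by finite sums $v^n(x,y) = \sum_k \varphi_k(x) b_k(y)$ with $\varphi_k \in C_0^\infty(\Omega)$, $b_k \in C^\infty_{\mathrm{per}}(\square)$, and set $V^n_\varepsilon(x) := v^n(x, x/\varepsilon)$. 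Expanding
\[
\|v_\varepsilon - V^n_\varepsilon\|_{L^2(\Omega)}^2 = \|v_\varepsilon\|^2 - 2\int_\Omega v_\varepsilon V^n_\varepsilon\,dx + \|V^n_\varepsilon\|^2,
\]
the three terms pass to their limits as $\varepsilon\to 0$ via, respectively, the norm hypothesis, the weak two-scale convergence of $v_\varepsilon$ against the smooth sum $V^n_\varepsilon$, and the standard periodicity computation $\|V^n_\varepsilon\|^2 \to \|v^n\|^2_{L^2(\Omega\times\square)}$, producing
\[
\|v_\varepsilon - V^n_\varepsilon\|_{L^2(\Omega)}^2 \to \|v - v^n\|_{L^2(\Omega\times\square)}^2.
\]
Given an arbitrary test sequence $u_\varepsilon \stackrel{2}\rightharpoonup u$ with $\|u_\varepsilon\|_{L^2} \leq C$, I split $\int u_\varepsilon v_\varepsilon\,dx = \int u_\varepsilon V^n_\varepsilon\,dx + \int u_\varepsilon(v_\varepsilon - V^n_\varepsilon)\,dx$; the principal term tends to $\int\int u v^n$ while the error is bounded by $C\|v_\varepsilon - V^n_\varepsilon\|_{L^2}$, so letting first $\varepsilon\to 0$ and then $n\to\infty$ yields $\int u_\varepsilon v_\varepsilon \to \int\int u v$, as required.

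Finally, (iii) follows from (ii). A direct computation using $b(\cdot/\varepsilon) \rightharpoonup \langle b\rangle$ weakly in $L^2_{\mathrm{loc}}$ together with the strong $L^2$-convergence $f_\varepsilon\to f$ gives $f_\varepsilon \stackrel{2}\rightharpoonup f$. Since $f$ is independent of $y$, $\|f\|_{L^2(\Omega\times\square)}^2 = \|f\|_{L^2(\Omega)}^2 = \lim \|f_\varepsilon\|_{L^2(\Omega)}^2$, and (ii) upgrades the weak two-scale convergence to $f_\varepsilon \stackrel{2}\rightarrow f$.
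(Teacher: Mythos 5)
Your proof is correct. Note that the paper itself does not prove Proposition \ref{p5.3}: it states immediately after the listed results that ``the proofs \dots repeat the proofs of the corresponding assertions in [Zhikov, \emph{Sb.\ Math.}\ 2000] with no or only small alterations, and are not given here.'' What you have written is precisely that standard argument (also found in Allaire's and Nguetseng's papers): part (i) by smooth approximation of $a$ in $L^2(\square)$ combined with the mean value property applied to $|a-a_n|^2\in L^1_{\mathrm{per}}(\square)$; part (ii) ``only if'' by testing against $\varphi(x)b(x/\varepsilon)$ and against $v_\varepsilon$ itself; part (ii) ``if'' by the density of finite tensor sums $v^n$, the expansion of $\|v_\varepsilon - v^n(\cdot,\cdot/\varepsilon)\|^2_{L^2(\Omega)}$, and the $\varepsilon\to 0$ then $n\to\infty$ limit; part (iii) as a corollary of (ii). All the limiting steps you invoke (boundedness of the sequences being tested, admissibility of the tensor-product test functions, the $\limsup$ sandwich in $n$) are accounted for, so the argument is complete at the level of detail appropriate here.
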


\begin{proposition}(The mean value property of periodic functions.) Let
$\Phi(y)\in L^1_{\mathrm{per}}(\square)$ . Then for each $\phi(x)\in
C^\infty_0 (\mathbb{R}^n)$ we have
\[
\lim_{\varepsilon \rightarrow 0} \int\limits_{\mathbb{R}^n}
\phi(x)\Phi(x/\varepsilon)dx = \langle \Phi \rangle_y
\int\limits_{\mathbb{R}^n} \phi(x)dx.
\]
\end{proposition}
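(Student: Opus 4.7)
The plan is a two-step argument: first establish the limit for continuous periodic $\Phi$ via a direct Riemann sum computation, then extend to general $\Phi\in L^1_{\mathrm{per}}(\square)$ by density, exploiting that $\phi$ has compact support so that rescaling of $\Phi$ produces an $L^1$ error controlled uniformly in $\varepsilon$.

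First I would cover $\mathbb{R}^n$ by the disjoint lattice of cells $\varepsilon(k+\square)$, $k\in\mathbb{Z}^n$, change variables $x=\varepsilon(k+y)$ cell-by-cell, and use the periodicity of $\Phi$ to rewrite
\[
\int_{\mathbb{R}^n}\phi(x)\Phi(x/\varepsilon)\,dx \;=\; \int_{\square} \Phi(y)\Bigl(\sum_{k\in\mathbb{Z}^n}\varepsilon^n\phi(\varepsilon(k+y))\Bigr)\,dy.
\]
For fixed $y\in\square$ the inner bracket is a Riemann sum for $\int_{\mathbb{R}^n}\phi$, and since $\phi\in C_0^\infty(\mathbb{R}^n)$ is uniformly continuous with compact support, this Riemann sum converges to $\int_{\mathbb{R}^n}\phi(x)\,dx$ uniformly in $y\in\square$, and is also uniformly bounded in $(y,\varepsilon)$. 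Integrating against $\Phi\in L^1(\square)$ and passing to the limit (by dominated convergence, or simply by pulling the uniform error out) then yields the claim with the continuous periodic $\Phi$ and with $\langle\Phi\rangle_y$ appearing as the mean of $\Phi$ over $\square$.

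For general $\Phi\in L^1_{\mathrm{per}}(\square)$, given $\delta>0$ I would approximate $\Phi$ by a continuous periodic $\Phi_\delta$ with $\|\Phi-\Phi_\delta\|_{L^1(\square)}<\delta$. Decomposing the integration over the union of cells $\varepsilon(k+\square)$ meeting $\mathrm{supp}\,\phi$ (whose number is of order $\varepsilon^{-n}|\mathrm{supp}\,\phi|$) and using periodicity once more gives
\[
\Bigl|\int_{\mathbb{R}^n}\phi(x)\bigl(\Phi-\Phi_\delta\bigr)(x/\varepsilon)\,dx\Bigr| \;\leq\; \|\phi\|_\infty\cdot C(\mathrm{supp}\,\phi)\,\|\Phi-\Phi_\delta\|_{L^1(\square)}\;\leq\; C(\phi)\,\delta
\]
uniformly in $\varepsilon$. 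Combined with the trivial estimate $|\langle \Phi-\Phi_\delta\rangle_y|\int|\phi|\leq \delta\int|\phi|$ and the already-established continuous case for $\Phi_\delta$, a standard three-term argument closes the proof.

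I would not expect a genuine obstacle here: the result is essentially the classical averaging/ergodic lemma for rational shifts. The only mild technical care concerns the boundary cells $\varepsilon(k+\square)$ that are only partially contained in $\mathrm{supp}\,\phi$; this is harmless because $\phi$ is defined on all of $\mathbb{R}^n$ and simply vanishes outside its support, so each such cell still contributes at most $\varepsilon^n\|\Phi-\Phi_\delta\|_{L^1(\square)}$ to the error and the total count of such cells is absorbed in the constant $C(\phi)$. The bookkeeping in the Riemann-sum step and the density estimate is therefore the whole substance of the argument.
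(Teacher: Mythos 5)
Your proof is correct. There is nothing in the paper to compare it against: the author explicitly omits the proofs of the statements in Section \ref{s5}, remarking that they ``repeat the proofs of the corresponding assertions in \cite{Zhikov2} with no or only small alterations,'' so your Riemann-sum-plus-density argument is simply the standard self-contained proof of this classical averaging lemma. One observation worth making: your first step is already a complete proof for arbitrary $\Phi\in L^1_{\mathrm{per}}(\square)$, so the density step is redundant. The cell-by-cell change of variables, the identity $\Phi(k+y)=\Phi(y)$ for a.e.\ $y$, and the interchange of the (finite, since $\phi$ has compact support) sum over $k$ with the integral over $\square$ use only measurability and integrability of $\Phi$; and the passage to the limit needs only the uniform convergence and uniform boundedness of $S_\varepsilon(y):=\sum_k\varepsilon^n\phi(\varepsilon(k+y))$ together with the crude bound
\[
\Bigl|\int_\square \Phi(y)\bigl(S_\varepsilon(y)-\textstyle\int_{\mathbb{R}^n}\phi\bigr)\,dy\Bigr|\;\le\;\|\Phi\|_{L^1(\square)}\,\sup_{y\in\square}\Bigl|S_\varepsilon(y)-\int_{\mathbb{R}^n}\phi(x)\,dx\Bigr|.
\]
Continuity of $\Phi$ is never actually invoked in that computation, so you may delete the approximation by $\Phi_\delta$ entirely; alternatively, if you prefer to keep the two-step structure for expository reasons, it is harmless and correctly executed, including the treatment of the boundary cells.
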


Potential vector space $V_{\mathrm{pot}}$ is defined as a closure of
the set $\{\nabla \varphi : \varphi \in
C^\infty_{\mathrm{per}}(\square)\}$ in $L^2(\square)^n$. We say that
a vector $b\in L^2(\square)^n$ is solenoidal ($b \in
V_{\mathrm{sol}}$) if it is orthogonal to all potential vectors.
Thus,
\[
L^2(\square)^n = V_{\mathrm{pot}} \oplus V_{\mathrm{sol}},
\]
and
\[
L^2(\Omega\times\square)^n = L^2(\Omega, V_{\mathrm{pot}}) \oplus
L^2(\Omega, V_{\mathrm{sol}}). \label{6.1}
\]

\begin{lemma}\label{L5.1}
Let $u_\varepsilon$ and $\varepsilon \nabla u_\varepsilon$ be
bounded in $L^2(\mathbb{R}^n)$. Then (up to a subsequence)
\[
\begin{aligned}
u_\varepsilon(x) \stackrel{2}\rightharpoonup u(x,y) \in
L^2(\mathbb{R}^n, H^1_{\mathrm{per}}),
\\
\varepsilon \nabla u_\varepsilon(x) \stackrel{2}\rightharpoonup
\nabla_y u(x,y), \qquad
\end{aligned}
\]
where $H^1_{\mathrm{per}} = H^1_{\mathrm{per}}(\square)$ is the
Sobolev space of periodic functions.
\end{lemma}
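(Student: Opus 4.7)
The plan is the standard Nguetseng--Allaire argument adapted to the setting of the paper: use the two-scale compactness of $L^2$-bounded sequences to extract a subsequence and a two-scale limit of both $u_\varepsilon$ and $\varepsilon \nabla u_\varepsilon$, and then identify the latter as $\nabla_y$ of the former by testing against oscillating smooth vector fields and integrating by parts.

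First, since $\{u_\varepsilon\}$ is bounded in $L^2(\mathbb{R}^n)$, the general two-scale compactness result (see e.g.\ \cite{Al, Ngu, Zhikov2}) yields a subsequence and a function $u \in L^2(\mathbb{R}^n \times \square)$ such that $u_\varepsilon(x) \stackrel{2}\rightharpoonup u(x,y)$. Similarly, applied componentwise to the bounded sequence $\{\varepsilon \nabla u_\varepsilon\}$, the same compactness (along a further subsequence, which I relabel) produces $W(x,y) \in L^2(\mathbb{R}^n \times \square)^n$ with $\varepsilon \nabla u_\varepsilon(x) \stackrel{2}\rightharpoonup W(x,y)$. It remains to identify $W = \nabla_y u$ and conclude $y$-regularity.

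For this identification, fix arbitrary $\varphi \in C_0^\infty(\mathbb{R}^n)$ and $\vec{b} \in C^\infty_{\mathrm{per}}(\square)^n$ and test the gradient against $\varphi(x)\vec{b}(x/\varepsilon)$. Integration by parts in $\mathbb{R}^n$, using $\nabla[\vec b(x/\varepsilon)] = \varepsilon^{-1}(\nabla_y \vec b)(x/\varepsilon)$, yields
\[
\int_{\mathbb{R}^n}\!\varepsilon\,\nabla u_\varepsilon \cdot \vec b(x/\varepsilon)\,\varphi(x)\,dx
= -\int_{\mathbb{R}^n}\! u_\varepsilon\,(\nabla_y\!\cdot\vec b)(x/\varepsilon)\,\varphi(x)\,dx
-\int_{\mathbb{R}^n}\!\varepsilon\, u_\varepsilon\, \vec b(x/\varepsilon)\cdot\nabla\varphi(x)\,dx.
\]
Passing to the limit as $\varepsilon\to0$: the left-hand side converges to $\int_{\mathbb{R}^n}\!\int_\square W(x,y)\cdot\vec b(y)\,\varphi(x)\,dy\,dx$ by weak two-scale convergence of $\varepsilon\nabla u_\varepsilon$; the first term on the right converges to $-\int_{\mathbb{R}^n}\!\int_\square u(x,y)(\nabla_y\!\cdot\vec b)(y)\,\varphi(x)\,dy\,dx$ by weak two-scale convergence of $u_\varepsilon$; and the second term is $O(\varepsilon)$ because $\|u_\varepsilon\|_{L^2}$ is bounded and the remaining factors are bounded in $L^\infty$ with compact support, so it vanishes in the limit.

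Combining these three limits gives
\[
\int_{\mathbb{R}^n}\!\int_\square W(x,y)\cdot\vec b(y)\,\varphi(x)\,dy\,dx
= -\int_{\mathbb{R}^n}\!\int_\square u(x,y)(\nabla_y\!\cdot\vec b)(y)\,\varphi(x)\,dy\,dx
\]
for every $\varphi \in C_0^\infty(\mathbb{R}^n)$ and every $\vec b \in C^\infty_{\mathrm{per}}(\square)^n$. Since $\varphi$ is arbitrary, for almost every $x \in \mathbb{R}^n$ the identity $\int_\square W(x,y)\cdot\vec b(y)\,dy = -\int_\square u(x,y)(\nabla_y\!\cdot\vec b)(y)\,dy$ holds for a countable dense family of $\vec b$'s, hence for all smooth periodic $\vec b$. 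This is precisely the statement that the distributional $y$-gradient of $u(x,\cdot)$ on the torus equals $W(x,\cdot)$ in $L^2(\square)^n$; therefore $u(x,\cdot) \in H^1_{\mathrm{per}}(\square)$ for a.e.\ $x$ and $\nabla_y u = W$. The resulting bound $\|u\|_{L^2(\mathbb{R}^n, H^1_{\mathrm{per}})}^2 = \|u\|_{L^2}^2 + \|W\|_{L^2}^2$ is controlled by the weak two-scale lower semicontinuity applied to the uniform bounds on $u_\varepsilon$ and $\varepsilon\nabla u_\varepsilon$, so $u \in L^2(\mathbb{R}^n, H^1_{\mathrm{per}})$ as claimed.

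The only delicate point, and therefore the main obstacle I would flag, is the passage from the integrated identity (in $x$) to the pointwise-in-$x$ distributional identity that yields $y$-regularity of $u$; this is handled by the standard density/separability argument sketched above, and does not require any structural input from the operator $A_\varepsilon$ beyond the $L^2$ bounds assumed.
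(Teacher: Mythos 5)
Your proof is correct and follows exactly the standard Nguetseng--Allaire--Zhikov argument (two-scale compactness for $u_\varepsilon$ and $\varepsilon\nabla u_\varepsilon$, then identification of the limit of the scaled gradient as $\nabla_y u$ by testing with $\varphi(x)\vec b(x/\varepsilon)$ and integrating by parts), which is precisely the proof the paper defers to in \cite{Zhikov2}. The density/measurability step you flag is handled the standard way you sketch, so there is no gap.
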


\begin{lemma}\label{L07}
Let $u_\varepsilon\in H^1(\mathbb{R}^n)$,
\begin{equation}
\begin{aligned}
u_\varepsilon(x) \stackrel{2}\rightharpoonup u(x)\in
H^1(\mathbb{R}^n),\label{6.2}
\end{aligned}
\end{equation}
and $\nabla u_\varepsilon$ is bounded in $L^2(\mathbb{R}^n)$. Then,
up to a subsequence,
\begin{equation}
\begin{aligned}
\nabla u_\varepsilon(x) \stackrel{2}\rightharpoonup \nabla u(x) +
v(x,y), \mbox{ where } v\in L^2(\mathbb{R}^n, V_{\mathrm{pot}}).
\label{6.3}
\end{aligned}
\end{equation}
\end{lemma}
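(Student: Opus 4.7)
The plan is to use the standard two-scale compactness to extract a limit of $\nabla u_\varepsilon$, and then identify its orthogonal decomposition along $L^2(\mathbb{R}^n;V_{\mathrm{pot}})\oplus L^2(\mathbb{R}^n;V_{\mathrm{sol}})$ by testing against smooth solenoidal correctors and integrating by parts.

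First, since $\nabla u_\varepsilon$ is bounded in $L^2(\mathbb{R}^n)^n$, the usual weak two-scale compactness yields, along a further subsequence, a limit
\[
\nabla u_\varepsilon(x)\stackrel{2}\rightharpoonup w(x,y)\in L^2(\mathbb{R}^n\times\square)^n.
\]
The assumption $u_\varepsilon\stackrel{2}\rightharpoonup u(x)\in H^1(\mathbb{R}^n)$ together with boundedness of $\nabla u_\varepsilon$ implies in particular that $u_\varepsilon\rightharpoonup u$ weakly in $H^1_{\mathrm{loc}}(\mathbb{R}^n)$, so $\nabla u\in L^2(\mathbb{R}^n)^n$ is well defined. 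The goal is to show that $v(x,y):=w(x,y)-\nabla u(x)$ lies in $L^2(\mathbb{R}^n;V_{\mathrm{pot}})$, or equivalently, that $v$ is orthogonal to $L^2(\mathbb{R}^n;V_{\mathrm{sol}})$ in $L^2(\mathbb{R}^n\times\square)^n$.

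To this end, take arbitrary $\psi\in C_0^\infty(\mathbb{R}^n)$ and a smooth periodic solenoidal test field $\phi\in C^\infty_{\mathrm{per}}(\square)^n$ with $\nabla_y\cdot\phi=0$. Integration by parts gives
\[
\int_{\mathbb{R}^n}\nabla u_\varepsilon(x)\cdot\psi(x)\,\phi(x/\varepsilon)\,dx
= -\int_{\mathbb{R}^n} u_\varepsilon(x)\Bigl(\nabla\psi(x)\cdot\phi(x/\varepsilon)+\varepsilon^{-1}\psi(x)(\nabla_y\!\cdot\!\phi)(x/\varepsilon)\Bigr)dx,
\]
and the $\varepsilon^{-1}$ term vanishes because $\nabla_y\cdot\phi\equiv 0$. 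Passing to the limit on the left via the two-scale convergence of $\nabla u_\varepsilon$, and on the right via the weak two-scale convergence of $u_\varepsilon$ to $u(x)$ (constant in $y$), yields
\[
\int_{\mathbb{R}^n}\!\!\int_{\square} w(x,y)\cdot\psi(x)\,\phi(y)\,dy\,dx
=-\int_{\mathbb{R}^n} u(x)\,\nabla\psi(x)\cdot\langle\phi\rangle_y\,dx
=\int_{\mathbb{R}^n}\!\!\int_{\square}\nabla u(x)\cdot\psi(x)\,\phi(y)\,dy\,dx,
\]
where the last equality is obtained by an ordinary integration by parts in $x$, using $u\in H^1(\mathbb{R}^n)$. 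Thus $\int\!\!\int (w-\nabla u)\cdot\psi\,\phi\,dy\,dx=0$ for all such $\psi$ and $\phi$.

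To conclude, one observes that smooth periodic divergence-free fields $\phi$, together with constant vectors (which are trivially solenoidal and also covered by the identity above), are dense in $V_{\mathrm{sol}}$, and $C_0^\infty(\mathbb{R}^n)$ is dense in $L^2(\mathbb{R}^n)$, so the product $\psi(x)\phi(y)$ spans a dense subset of $L^2(\mathbb{R}^n;V_{\mathrm{sol}})$. Hence $w-\nabla u$ is $L^2$-orthogonal to $L^2(\mathbb{R}^n;V_{\mathrm{sol}})$, and by the decomposition $L^2(\mathbb{R}^n\times\square)^n=L^2(\mathbb{R}^n;V_{\mathrm{pot}})\oplus L^2(\mathbb{R}^n;V_{\mathrm{sol}})$ stated just before the lemma, $v:=w-\nabla u\in L^2(\mathbb{R}^n;V_{\mathrm{pot}})$, proving (\ref{6.3}). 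The main technical point to be careful about is the density argument for solenoidal test fields, ensuring that constants (which are also in $V_{\mathrm{sol}}$) are properly included; the vanishing of the $\varepsilon^{-1}$ term in the integration by parts, which is the whole reason solenoidal rather than arbitrary test fields must be used, is the other delicate ingredient.
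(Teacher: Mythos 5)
Your proposal is correct and follows essentially the same route as the argument the paper defers to (the standard proof in Zhikov's work cited as the source for Lemma \ref{L07}): extract a weak two-scale limit of $\nabla u_\varepsilon$, test with $\psi(x)\phi(x/\varepsilon)$ for periodic solenoidal $\phi$ so that the $\varepsilon^{-1}$ term drops out, and identify the limit via the orthogonal decomposition $L^2(\Omega\times\square)^n=L^2(\Omega,V_{\mathrm{pot}})\oplus L^2(\Omega,V_{\mathrm{sol}})$. Your inclusion of constant test fields is indeed the step that pins down $\langle w\rangle_y=\nabla u$, so the argument is complete.
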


\begin{lemma}\label{L08}
Let (\ref{6.2}) and (\ref{6.3}) be valid. Let also
\begin{equation}
\begin{aligned}
\lim_{\varepsilon \to 0} \int\limits_{\Omega_1^\varepsilon} a_1
\nabla u_\varepsilon(x) \cdot \nabla_y w(\varepsilon ^{-1} x)
\varphi(x) \,dx = 0
\end{aligned}\label{54}
\end{equation}
for any $\varphi \in C_0^\infty(\Omega_1)$ and $w\in
C^\infty_{\mathrm{per}}(\square)$. Then the following weak
convergence of the flows takes place:
\[
\begin{aligned}
a_1 \theta_{Q_1}(\varepsilon^{-1} x) \nabla u_\varepsilon(x)
\rightharpoonup A^{\mathrm{hom}} \nabla u(x) \mbox{ in } \Omega_1,
\label{6.4}
\end{aligned}
\]
where homogenized matrix $A^{\rm{hom}}$ is defined by
(\ref{eq:ahom}).
\end{lemma}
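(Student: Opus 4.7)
The approach will be the standard two-scale argument that identifies the homogenised flux, adapted to the high-contrast geometry in which only the matrix phase $Q_1$ carries a nondegenerate cell problem. The idea is to read hypothesis (\ref{54}) as a weak Euler--Lagrange equation pinning down the corrector field $v(x,\cdot)$ on $Q_1$, identify $v$ with the standard cell corrector associated with $\nabla u(x)$, and then pass directly to the limit in the expression for the flux.

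First I would pass to the two-scale limit in (\ref{54}). Since $\varphi \in C_0^\infty(\Omega_1)$, its support lies at positive distance from $\overline{\Omega_2}$, so for all sufficiently small $\varepsilon$ the thin layer of partial inclusions $\widetilde{\Omega}_0^\varepsilon$ does not intersect $\mathrm{supp}\,\varphi$; there one has $\theta_{\Omega_1^\varepsilon}(x) = \theta_{Q_1}(x/\varepsilon)$. The admissible oscillating test field $\theta_{Q_1}(y)\nabla_y w(y)\varphi(x)$ paired with $\nabla u_\varepsilon$ via Proposition~\ref{p5.3}(i) and (\ref{6.3}) then converts (\ref{54}) into
\[
\int_{\Omega_1}\int_{Q_1} a_1 \bigl(\nabla u(x) + v(x,y)\bigr) \cdot \nabla_y w(y)\,\varphi(x)\,dy\,dx \;=\; 0.
\]
Varying $\varphi \in C_0^\infty(\Omega_1)$ I would conclude that for a.e.\ $x \in \Omega_1$ and every $w \in C^\infty_{\mathrm{per}}(\square)$,
\[
\int_{Q_1} a_1 \bigl(\nabla u(x) + v(x,y)\bigr) \cdot \nabla_y w(y)\,dy \;=\; 0.
\]

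Next I would recognise this as the Euler--Lagrange equation for the minimisation in (\ref{eq:ahom}) with $\xi = \nabla u(x)$. Denoting by $N_\xi$ its minimiser, and using that $v(x,\cdot) \in V_{\mathrm{pot}}$ means $v(x,y)=\nabla_y V(x,y)$ for some $\square$-periodic $V$, one checks (by subtracting $N_{\nabla u(x)}$ and testing against the difference) that $v(x,y) = \nabla_y N_{\nabla u(x)}(y)$ for a.e.\ $y \in Q_1$; the trace of $v$ on $Q_0$ plays no role in what follows. A standard polarisation of (\ref{eq:ahom}), using the linearity $\xi \mapsto N_\xi$ and the Euler--Lagrange equation to annihilate the cross term, yields the classical formula
\[
A^{\mathrm{hom}}\xi \;=\; \int_{Q_1} a_1 \bigl(\xi + \nabla_y N_\xi(y)\bigr)\,dy,
\]
so that $\int_{Q_1} a_1(\nabla u(x)+v(x,y))\,dy = A^{\mathrm{hom}}\nabla u(x)$. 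Finally, testing against arbitrary $\psi \in C_0^\infty(\Omega_1)^n$ with the $y$-constant two-scale test field $\theta_{Q_1}(y)\psi(x)$ gives
\[
\int_{\Omega_1} a_1 \theta_{Q_1}(x/\varepsilon) \nabla u_\varepsilon \cdot \psi \,dx \;\longrightarrow\; \int_{\Omega_1}\psi(x)\cdot\!\int_{Q_1} a_1\bigl(\nabla u(x)+v(x,y)\bigr)\,dy\,dx \;=\; \int_{\Omega_1} A^{\mathrm{hom}}\nabla u\cdot\psi\,dx,
\]
which is the claimed weak convergence in $\Omega_1$.

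The main obstacle I expect is purely geometric bookkeeping: relating the perturbed matrix phase $\Omega_1^\varepsilon$ (which excludes the partial inclusions $\widetilde{\Omega}_0^\varepsilon$ straddling $\partial\Omega_2$) to the purely periodic characteristic $\theta_{Q_1}(x/\varepsilon)$ that appears in the conclusion. This should be manageable because all test functions are compactly supported in $\Omega_1$ and hence avoid $\widetilde{\Omega}_0^\varepsilon$ for sufficiently small $\varepsilon$, so the discrepancy contributes only a vanishing remainder; but the identification has to be done cleanly to make sure (\ref{6.3}) is applied against a genuine admissible two-scale test field. Beyond this, everything is the classical two-scale identification of the effective flux in the spirit of \cite{Al, Zhikov2, Zhikov1}.
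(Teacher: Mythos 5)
Your proof is correct and follows essentially the same route as the paper, which does not spell out the argument but explicitly defers the proofs of the Section~\ref{s5} lemmas to the corresponding assertions in \cite{Zhikov2}: identify the two-scale limit of the flux via Proposition~\ref{p5.3}(i), read (\ref{54}) as the cell-problem (Euler--Lagrange) condition on $v(x,\cdot)$ over $Q_1$, and average in $y$ using the standard representation of $A^{\rm hom}$. The geometric point you flag --- that $\theta_{\Omega_1^\varepsilon}=\theta_{Q_1}(\cdot/\varepsilon)$ on the support of any $\varphi\in C_0^\infty(\Omega_1)$ for small $\varepsilon$ --- is handled correctly and is indeed the only adaptation needed relative to the defect-free case.
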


The proofs of the listed statements repeat the proofs of the
corresponding assertions in \cite{Zhikov2} with no or only small
alterations, and are not given here.

\begin{definition}\label{defrc}
Let $A_\varepsilon$, $\varepsilon >0$, and $A_0$ be non-negative
self-adjoint operators in $L^2(\mathbb{R}^n)$ and $\mathcal {H}_0
\subset L^2(\mathbb{R}^n \times Q)$, see (\ref{5.1}), respectively.
We say that $A_\varepsilon \stackrel{2}\rightarrow A_0$ in the sense
of the strong two-scale resolvent convergence if
$\left(A_\varepsilon + I \right)^{-1} f_\varepsilon
\stackrel{2}\rightarrow \left(A_0 + I \right)^{-1} f_0$ as long as
$f_\varepsilon \stackrel{2}\rightarrow f_0$.
\end{definition}

\section{Strong two-scale convergence of the eigenfunctions and multiplicity of the eigenvalues of $A_\varepsilon$}\label{s6}

In this section we will show that the normalized eigenfunctions
$u_\varepsilon$ are compact in the sense of strong two-scale
convergence. Namely, provided $\lambda_\varepsilon \to \lambda_0$, a
sequence of normalized eigenfunctions $u_\varepsilon$ of the
operator $A_\varepsilon$ strongly two-scale converges, up to a
subsequence, to a function $u^0(x,y)$. This implies that $u^0(x,y)$
is an eigenfunction corresponding to the eigenvalue $\lambda_0$ of
the limit operator $A_0$. This, together with results of
\cite{SmysKam}, establishes an `asymptotic one-to-one
correspondence' between isolated eigenvalues and corresponding
eigenfunctions of the operators $A_\varepsilon$ and $A_0$.

\begin{theorem}\label{th6}
Under the assumptions of Theorem \ref{theorem1} $\lambda_0$ is an
eigenvalue of the operator $A_0$. Moreover, there exists a
subsequence $\varepsilon$ such that eigenfunctions $u^\varepsilon$
of the operator $A_\varepsilon$ strongly two-scale converge to an
eigenfunction $u^0(x,y)$ of $A_0$ corresponding to the eigenvalue
$\lambda_0$.
\end{theorem}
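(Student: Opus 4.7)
My strategy is threefold: (i) use the decomposition $u^\varepsilon = \widetilde{u}^\varepsilon + v^\varepsilon$ from Section \ref{s3} to extract weak two-scale limits; (ii) identify the limit $v$ explicitly via the cell-wise representation (\ref{41}) and upgrade the convergence to strong two-scale by matching $L^2$-norms; (iii) pass to the limit in (\ref{5}) against a suitable two-scale test function to obtain (\ref{10}) with $\lambda_0$.

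\textbf{Extracting the limits.} From (\ref{130}), (\ref{28}), Rellich compactness on balls and the tightness at infinity provided by Theorem \ref{theorem1} and the ensuing remark, a subsequence of $\widetilde{u}^\varepsilon$ converges to some $u_0$ strongly in $L^2(\mathbb{R}^n)$ and weakly in $H^1(\mathbb{R}^n)$; by Proposition \ref{p5.3}(iii), $\widetilde{u}^\varepsilon \stackrel{2}\rightarrow u_0(x)$ strongly two-scale, and by Lemma \ref{L07}, $\nabla \widetilde{u}^\varepsilon \stackrel{2}\rightharpoonup \nabla u_0 + u_1(x,y)$ with $u_1 \in L^2(\mathbb{R}^n, V_{\mathrm{pot}})$. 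For $v^\varepsilon$, estimate (\ref{4.6}) together with $a \sim \varepsilon^2$ inside the inclusions yields uniform $L^2$-bounds on $v^\varepsilon$ and $\varepsilon \nabla v^\varepsilon$; extending by zero and applying Lemma \ref{L5.1} gives $v^\varepsilon \stackrel{2}\rightharpoonup v(x,y)$ and $\varepsilon \nabla v^\varepsilon \stackrel{2}\rightharpoonup \nabla_y v$. The boundary condition $v^\varepsilon|_{\partial(\varepsilon Q_0)} = 0$ and the vanishing (\ref{27}) on $\widetilde{\Omega}_0^\varepsilon$ force $v \in L^2(\Omega_1; H^1_0(Q_0))$, so $u^\varepsilon \stackrel{2}\rightharpoonup u^0 := u_0 + v \in \mathcal{V}$.

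\textbf{Strong two-scale convergence.} The fine decomposition (\ref{39})--(\ref{41}) gives $\widetilde{v}^\varepsilon(x) = \lambda_\varepsilon \langle \widetilde{u}^\varepsilon \rangle_y \, b_\varepsilon(x/\varepsilon)$ cell-wise, with $\|\widehat v^\varepsilon\|_{L^2(\mathbb{R}^n)}^2 \le C\varepsilon^2 \|\nabla \widetilde{u}^\varepsilon\|_{L^2}^2 \to 0$ by summing (\ref{43}), and $b_\varepsilon \to b$ in $H^1_0(Q_0)$ (the solution of (\ref{13a}) at $\lambda=\lambda_0$) by the uniform boundedness of the resolvent of $T$ near $\lambda_0$. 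A direct cell-wise computation using Proposition \ref{p5.3} gives $\|\widetilde v^\varepsilon\|_{L^2}^2 \to \lambda_0^2 \|b\|_{L^2(Q_0)}^2 \int_{\Omega_1} u_0^2\, dx$, which matches $\|v\|_{L^2(\mathbb{R}^n\times Q)}^2$ for $v(x,y) = \lambda_0 u_0(x) b(y) \chi_{\Omega_1\times Q_0}(x,y)$. Combined with $\|\widetilde{u}^\varepsilon\|_{L^2}^2 \to \|u_0\|_{L^2}^2$ and $(\widetilde{u}^\varepsilon, v^\varepsilon)_{L^2} \to \int\int u_0 v\, dy\, dx$, the normalization $\|u^\varepsilon\|_{L^2}^2 = 1$ yields $\|u^0\|_{L^2(\mathbb{R}^n \times Q)}^2 = 1$, so by Proposition \ref{p5.3}(ii), $u^\varepsilon \stackrel{2}\rightarrow u^0$ strongly and $u^0 \not\equiv 0$.

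\textbf{Limit equation.} Insert into (\ref{5}) a test function $w = w_0(x) + \varepsilon \psi(x, x/\varepsilon) + z(x, x/\varepsilon)$ with $w_0 \in C_0^\infty(\mathbb{R}^n)$, $\psi \in C_0^\infty(\mathbb{R}^n; C^\infty_{\mathrm{per}}(\square))$, and $z \in C_0^\infty(\Omega_1; C^\infty_0(Q_0))$ extended by zero in $y$. The right-hand side passes to $\lambda_0 \int_{\mathbb{R}^n}\int_Q u^0 (w_0 + z)\,dy\,dx$ by strong two-scale convergence. On the left, the $\Omega_2^\varepsilon$-piece converges to $a_2 \int_{\Omega_2} \nabla u_0 \cdot \nabla w_0$; the $\Omega_1^\varepsilon$-piece converges by Lemma \ref{L08} (with $\psi$ chosen as the standard cell-problem corrector matching $u_1$) to $\int_{\Omega_1} A^{\mathrm{hom}} \nabla u_0 \cdot \nabla w_0$; the $\Omega_0^\varepsilon$-piece converges by weak two-scale convergence of $\varepsilon \nabla v^\varepsilon$ to $a_0 \int_{\Omega_1}\int_{Q_0} \nabla_y v \cdot \nabla_y z$; and the $\widetilde{\Omega}_0^\varepsilon$-contributions vanish by (\ref{26})--(\ref{27}) since $|\widetilde{\Omega}_0^\varepsilon| \to 0$. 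This is exactly (\ref{10}), so $u^0$ is an eigenfunction of $A_0$ at $\lambda_0$. The main obstacle is the strong two-scale step: abstract compactness alone would leave open a possible loss of mass; it is the explicit representation (\ref{41}) combined with the strong $L^2(\mathbb{R}^n)$-convergence of $\widetilde{u}^\varepsilon$ (itself a consequence of the uniform exponential decay of Theorem \ref{theorem1}) that prevents $u^0$ from being trivial and gives the precise form of $v$.
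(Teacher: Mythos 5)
Your proposal is correct and follows the same overall skeleton as the paper (decomposition $u^\varepsilon=\widetilde{u}^\varepsilon+v^\varepsilon$, strong $L^2$-convergence of $\widetilde{u}^\varepsilon$ from the uniform exponential decay plus Rellich, strong two-scale convergence of $v^\varepsilon$, then passage to the limit in the weak formulation with a two-scale test function). The one step where you take a genuinely different route is the upgrade from weak to strong two-scale convergence of $v^\varepsilon$ on $\Omega_0^\varepsilon$. The paper (Lemma \ref{L7}, following Zhikov) proves this by a duality/symmetry argument: it introduces the auxiliary family $-\varepsilon^2 a_0\Delta z_\varepsilon-\lambda_\varepsilon z_\varepsilon=f_\varepsilon$ with an arbitrary weakly two-scale convergent right-hand side, equates the two bilinear forms, and verifies the defining property of strong two-scale convergence directly. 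You instead exploit the explicit cell-wise representation $\widetilde{v}^\varepsilon=\lambda_\varepsilon\langle\widetilde{u}^\varepsilon\rangle_y\,b_\varepsilon(x/\varepsilon)$ from (\ref{39})--(\ref{41}), kill $\widehat{v}^\varepsilon$ by summing (\ref{43}), compute the limit of $\|v^\varepsilon\|_{L^2}^2$ by hand, and invoke Proposition \ref{p5.3}(ii). Both are valid; yours is more elementary and has the bonus of producing the explicit formula $v=\lambda_0 u_0(x)b(y)$ at this stage (which the paper only records in a later remark), whereas the paper's duality argument is the one that transfers verbatim to the resolvent equation and hence to the strong two-scale resolvent convergence claimed in Theorem \ref{th1}. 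Two small points you should make explicit in a final write-up: the hypothesis (\ref{54}) of Lemma \ref{L08} must be verified (in the paper this is done by testing (\ref{5}) with $\varepsilon\,w(x/\varepsilon)\varphi(x)$; your inclusion of the term $\varepsilon\psi(x,x/\varepsilon)$ in the test function accomplishes the same thing, but the cancellation should be spelled out), and the density of your class of test functions in $\mathcal{V}$ is needed to pass from the resulting identity to (\ref{10}).
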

\begin{proof}

\textbf{Step 1.} In order to establish strong two-scale convergence
of the eigenfunctions $u^\varepsilon = \widetilde{u}^\varepsilon +
v^\varepsilon$ we establish it for each of its components
separately. From (\ref{4.16}) and (\ref{28}) it follows that
\begin{equation}
\begin{aligned}
\| \widetilde{u}^\varepsilon \|_{H^1(\mathbb{R}^n \backslash B_R)}
\leq C e^{-\alpha R} \label{59}
\end{aligned}
\end{equation}
with $C$ independent of $\varepsilon$ and $R$. From this one can
easily conclude that $\widetilde{u}^\varepsilon$ is weakly compact
in $H^1(\mathbb{R}^n)$ and strongly compact in $L^2(\mathbb{R}^n)$.
Indeed, since $\widetilde{u}^\varepsilon$ are bounded in
$H^1(\mathbb{R}^n)$ uniformly in $\varepsilon$,
\begin{equation}
\begin{aligned}
\widetilde{u}^\varepsilon \rightharpoonup u_0 \, \text{ in }\,
H^1(\mathbb{R}^n), \label{7.6}
\end{aligned}
\end{equation}
up to a subsequence. For any fixed $R$ function
$\widetilde{u}^\varepsilon$ converges to $u_0$ weakly in $H^1(B_R)$
and, hence, strongly in $L^2(B_R)$ up to a subsequence. Considering
a sequence of balls $B_R$, $R\in\mathbb{N}$, one can use the method
of extracting a diagonal subsequence such that
\begin{equation}
\widetilde{u}^\varepsilon \rightarrow u_0 \, \text{ in }\, L^2(B_R)
\label{57}
\end{equation}
for any $R>0$.

For any $\delta
> 0$ we can choose $R$ such that $\|u_0\|_{L^2(\mathbb{R}^n \backslash
B_R)} < \delta / 3$ and
$\|\widetilde{u}^\varepsilon\|_{L^2(\mathbb{R}^n \backslash B_R)} <
\delta / 3$ for sufficiently small $\varepsilon$ (the latter follows
from (\ref{59})). From (\ref{57}) it follows that $\|u_0 -
\widetilde{u}^\varepsilon\|_{L^2(B_R)} < \delta / 3$ for
sufficiently small $\varepsilon$. Then, up to a subsequence,
\[
\begin{aligned}
\|u_0 - \widetilde{u}^\varepsilon\|_{L^2(\mathbb{R}^n)} \leq \|u_0 -
\widetilde{u}^\varepsilon\|_{L^2(B_R)} + \|u_0\|_{L^2(\mathbb{R}^n
\backslash B_R)} + \|\widetilde{u}^\varepsilon\|_{L^2(\mathbb{R}^n
\backslash B_R)} < \delta
\end{aligned}
\]
for small enough $\varepsilon$. Hence, up to a subsequence, we have
\[
\begin{aligned}
\widetilde{u}^\varepsilon \rightarrow u_0 \, \text{ in }\,
L^2(\mathbb{R}^n). \label{7.8}
\end{aligned}
\]
Then from properties of the two-scale convergence we conclude that
\begin{equation}
\widetilde{u}^\varepsilon \stackrel{2}\rightarrow u_0.  \label{7.9}
\end{equation}

\textbf{Step 2.} Now let us consider  $v^\varepsilon$. We denote by
$v^\varepsilon_1$ and $v^\varepsilon_2$ its restrictions
$v_\varepsilon |_{\Omega_0^\varepsilon}$ and $v_\varepsilon
|_{\widetilde{\Omega}_0^\varepsilon}$ respectively, extended by zero
to the rest of $\mathbb{R}^n$.

\begin{lemma} \label{L7}The following convergence properties are valid for $v^\varepsilon_1$
(up to a subsequence):
\[
\begin{aligned}
v^\varepsilon_1 (x) &\stackrel{2}\rightarrow v(x,y) \in
L^2(\Omega_1, H^1_0(Q_0)),
\\
\varepsilon \nabla v^\varepsilon_1 (x) &\stackrel{2}\rightharpoonup
\nabla_y v(x,y),
\end{aligned}
\]
where $v(x,y)$ is a solution to the following problem:
\begin{equation}
-a_0 \Delta_y v - \lambda_0 v = \lambda_0 u_0, \quad y \in Q_0.
\label{7.22}
\end{equation}
Here $u_0$ is a function from (\ref{7.9}).
\end{lemma}
\begin{proof} Function $v^\varepsilon_1 \in H^1(\Omega_0^\varepsilon)$
satisfies the following differential equation:
\begin{equation}
-\varepsilon^2 a_0 \Delta v^\varepsilon_1 - \lambda_\varepsilon
v^\varepsilon_1 = \lambda_\varepsilon \widetilde{u}^\varepsilon
\,\,\, \mbox{in} \,\, \Omega^\varepsilon_0. \label{7.10}
\end{equation}
The right hand side of this equation is of the form
$\lambda_\varepsilon \theta_{\Omega^\varepsilon_0}
\widetilde{u}^\varepsilon$. By (\ref{7.9}) and the properties of the
two-scale convergence we have
\begin{equation}
\lambda_\varepsilon \theta_{\Omega^\varepsilon_0}(x)
\widetilde{u}^\varepsilon(x) \stackrel{2}\rightarrow \lambda_0
\theta_{Q_0}(y) \theta_{\Omega_1}(x) u_0(x). \label{7.13}
\end{equation}

Following \cite{Zhikov2} we consider more general problem
\begin{equation}
z_\varepsilon \in H^1(\Omega_0^\varepsilon), \quad -\varepsilon^2
a_0 \Delta z_\varepsilon - \lambda_\varepsilon z_\varepsilon =
f_\varepsilon, \quad f_\varepsilon \in
L^2(\Omega^\varepsilon_0).\label{7.12}
\end{equation}
(It is implicit that $f_\varepsilon=z_\varepsilon=0$ in
$\mathbb{R}^n \backslash \Omega^\varepsilon_0$.)
\begin{proposition}Let
\begin{equation}
f^\varepsilon (x) \stackrel{2}\rightharpoonup f(x,y).
\end{equation}
Then
\[
\begin{aligned}
z^\varepsilon (x) &\stackrel{2}\rightharpoonup z(x,y) \in
L^2(\Omega_1, H^1_0(Q_0)),
\\
\varepsilon \nabla z^\varepsilon (x) &\stackrel{2}\rightharpoonup
\nabla_y z(x,y), \label{7.14}
\end{aligned}
\]
where function $z(x,y)$ solves the following equation:
\begin{equation}
-a_0 \Delta_y z - \lambda_0 z = f, \quad y \in Q_0. \label{7.15}
\end{equation}
\end{proposition}
\begin{proof}
One can easily derive an estimate for $z^\varepsilon$ analogous to
(\ref{4.6}), applying to (\ref{7.12}) a reasoning similar to those
for the solution of equation (\ref{13}). This give us the weak
two-scale convergence of $z^\varepsilon$ and $\varepsilon \nabla
z^\varepsilon$ via Lemma \ref{L5.1}. The result follows by a
straightforward passing to the limit in the integral identity
corresponding to (\ref{7.12}) with appropriately chosen test
function. The full proof could be found in \cite{Zhikov2} and
applies to the present situation with no alteration.
\end{proof}

The above proposition together with (\ref{7.13}) establishes a
``weak'' form of the statement of the lemma, i.e. weak two-scale
convergence of $v^\varepsilon_1$. We now prove that the convergence
is actually strong, following again \cite{Zhikov2}. Multiply
(\ref{7.10}) and (\ref{7.12}) by $z^\varepsilon$ and
$v^\varepsilon_1$ respectively and integrate by parts. The left hand
sides of the resulting equalities are identical. So, equating the
right hand sides, we obtain the following identity
\[
\int\limits_{\Omega_1} f^\varepsilon v^\varepsilon \,dx =
\lambda_\varepsilon \int\limits_{\Omega_1} \widetilde{u}^\varepsilon
z^\varepsilon \,dx.
\]
By the definition of the strong two-scale convergence we have
\[
\lim_{\varepsilon \to 0} \lambda_\varepsilon \int\limits_{\Omega_1}
\widetilde{u}^\varepsilon z^\varepsilon \,dx = \lambda_0
\int\limits_{\Omega_1} \int\limits_{Q_0}u_0(x) z(x,y) \,dy \,dx.
\]
Multiplying (\ref{7.22}) and (\ref{7.15}) by $z$ and $v$
respectively and integrating by parts it is easy to see that
\[
\lambda_0 \int\limits_{\Omega_1} \int\limits_{Q_0}u_0(x) z(x,y) \,dy
dx = \int\limits_{\Omega_1} \int\limits_{Q_0}f(x,y) v(x,y) \,dy
\,dx.
\]
Thus, we have a convergence of the integrals:
\[
\lim_{\varepsilon \to 0} \int\limits_{\Omega_1} f^\varepsilon
v^\varepsilon_1 \,dx  = \int\limits_{\Omega_1}
\int\limits_{Q_0}f(x,y) v(x,y) \,dy \,dx
\]
for any weakly two-scale convergent sequence $f^\varepsilon$. Hence,
by the definition,
\[
v^\varepsilon_1 (x) \stackrel{2}\rightarrow v(x,y).
\]
\end{proof}

\begin{lemma} \label{L9}
Sequence of functions $v^\varepsilon_2$ converges to zero in the
sense of strong two-scale convergence:
\[
\begin{aligned}
v^\varepsilon_2 \stackrel{2}\rightarrow 0 \,\, \mbox{as }
\varepsilon \to 0.
\end{aligned}
\]
\end{lemma}
\begin{proof}
Straightforward from (\ref{27}) and Proposition \ref{p5.3}
\textit{(iii)}.
\end{proof}

Combining (\ref{7.9}) with Lemmas \ref{L7} and \ref{L9}, we arrive
at
\begin{equation}
\begin{aligned}
u^\varepsilon (x) \stackrel{2}\rightarrow u^0(x,y) = u_0(x) +
v(x,y),\label{7.37}
\end{aligned}
\end{equation}
where $u_0\in H^1(\mathbb{R}^n), \,\, v \in L^2(\Omega_1,
H^1_0(Q_0))$.

\medskip
\textbf{Step 3.} Now it remains to show that $u^0(x,y)$ is an
eigenfunction and $\lambda_0$ is the corresponding eigenvalue of the
limit operator $A_0$, i.e. that $u^0(x,y)$ satisfies (\ref{10}). In
order to do that we need to choose appropriate test-function
$\psi^\varepsilon$ and pass to the limit in the integral identity
\begin{equation}
\begin{aligned}
\varepsilon^2 a_0 \int\limits_{\Omega_0^\varepsilon} \nabla
u^\varepsilon \cdot \nabla \psi^\varepsilon \,dx + a_1
\int\limits_{\Omega_1^\varepsilon}& \nabla u^\varepsilon \cdot\nabla
\psi^\varepsilon \,dx +
\int\limits_{\widetilde{\Omega}_0^\varepsilon} \widetilde{a}_0
\nabla u^\varepsilon \cdot\nabla \psi^\varepsilon \,dx +
\\
&+ a_2 \int\limits_{\Omega_2^\varepsilon} \nabla u^\varepsilon \cdot
\nabla \psi^\varepsilon \,dx = \lambda_\varepsilon
\int\limits_{\mathbb{R}^n} u^\varepsilon \psi^\varepsilon \,dx
\label{7.38}
\end{aligned}
\end{equation}
corresponding to the original eigenvalue problem
(\ref{2})--(\ref{3}). Let us take
\begin{equation}
\begin{aligned}
&\psi^\varepsilon(x) = \psi_0(x) + \varphi(x) b(\varepsilon^{-1} x),
\\
&\psi_0 \in C_0^\infty(\mathbb{R}^n), \, \varphi \in
C_0^\infty(\Omega_1), \, b(y) \in C_0^\infty(Q_0), \label{7.39}
\end{aligned}
\end{equation}
and consider each term of (\ref{7.38}) separately. Let us expand the
first term:
\[
\begin{aligned}
\varepsilon^2 a_0 \int\limits_{\Omega_0^\varepsilon} \nabla
u^\varepsilon &\nabla \psi^\varepsilon \,dx = \varepsilon^2 a_0
\int\limits_{\Omega_0^\varepsilon} \nabla \widetilde{u}^\varepsilon
\nabla \psi^\varepsilon \,dx +
\\
+ \varepsilon^2 a_0 &\int\limits_{\Omega_0^\varepsilon} \nabla
v^\varepsilon \left(\nabla \psi_0 + b(\varepsilon^{-1} x) \nabla
\varphi\right) dx +  a_0
\int\limits_{\Omega_0^\varepsilon}\varepsilon \nabla v^\varepsilon \varphi \nabla_y b(\varepsilon^{-1} x) \,dx. 
\end{aligned}
\]
As $\nabla \widetilde{u}^\varepsilon$ is bounded in $L^2$-norm and
$|\nabla \psi^\varepsilon|\leq C \varepsilon^{-1}$ then the first
term on the right hand side tends to zero. From (\ref{4.6}) and the
boundedness of $\nabla \psi_0 + b \nabla \varphi$ we conclude that
the second term also converges to zero. Since by Lemma \ref{L7}
$\varepsilon \nabla v^\varepsilon$ converges two-scale weakly, from
the definition of the weak two-scale convergence we obtain
\begin{equation}
\lim_{\varepsilon \to 0}\varepsilon^2 a_0
\int\limits_{\Omega_0^\varepsilon} \nabla u^\varepsilon \nabla
\psi^\varepsilon \,dx = a_0 \int\limits_{\Omega_1} \int\limits_{Q_0}
\nabla_y v(x,y) \varphi(x) \nabla_y b(y) \,dy \,dx. \label{72n}
\end{equation}

Let us show that convergence property (\ref{54}) holds for
$u^\varepsilon$. To this end we substitute into (\ref{7.38}) a test
function of the form $\varepsilon \, w(\varepsilon ^{-1} x)
\varphi(x)$, $\varphi \in C_0^\infty(\Omega_1)$, $w\in
C^\infty_{\mathrm{per}}(\square)$, cf. \cite{Zhikov2}. Then all the
terms except, possibly,
\[
\int\limits_{\Omega_1^\varepsilon} a_1 \nabla u^\varepsilon(x) \cdot
\nabla_y w(\varepsilon ^{-1} x) \varphi(x) \,dx
\]
converge to zero. As a result, the above term also converges to
zero. We then apply Lemma \ref{L07} for $u_\varepsilon$ replaced by
$\widetilde{u}^\varepsilon$. Since $\widetilde{u}^\varepsilon$
coincides with $u^\varepsilon$ on $\Omega_1^\varepsilon$, by Lemma
\ref{L08} applied to the second term on the left hand side of
(\ref{7.38}) with $\psi^\varepsilon$ as in (\ref{7.39}) we obtain
\begin{equation}
\lim_{\varepsilon \to 0}\, a_1 \int\limits_{\Omega_1^\varepsilon}
\nabla u^\varepsilon \cdot \nabla \psi^\varepsilon \,dx =
\lim_{\varepsilon \to 0}\, a_1 \int\limits_{\Omega_1^\varepsilon}
\nabla u^\varepsilon \cdot \nabla \psi_0 \,dx =
\int\limits_{\Omega_1} A^{\rm{hom}}\nabla u_0 \cdot \nabla \psi_0
\,dx. \label{73n}
\end{equation}

For small enough $\varepsilon$ the function $\psi^\varepsilon$ is
equal to $\psi_0$ in $\widetilde{\Omega}_0^\varepsilon$, so
$\nabla\psi^\varepsilon$ is bounded in $
\widetilde{\Omega}_0^\varepsilon$. Since
$\int\limits_{\widetilde{\Omega}_0^\varepsilon} \widetilde{a}_0
|\nabla u^\varepsilon|^2\, \,dx$ is bounded uniformly in
$\varepsilon$ and $\big|\widetilde{\Omega}_0^\varepsilon\big|\to 0$
as $\varepsilon \to 0$, we have
\begin{equation}
\left|\int\limits_{\widetilde{\Omega}_0^\varepsilon} \widetilde{a}_0
\nabla u^\varepsilon \nabla \psi^\varepsilon \,dx \right| \leq C
\int\limits_{\widetilde{\Omega}_0^\varepsilon} \widetilde{a}_0
|\nabla u^\varepsilon| \,dx \leq C
\big|\widetilde{\Omega}_0^\varepsilon\big|^{1/2}\,
\widetilde{a}_0^{1/2}\,
\left(\int\limits_{\widetilde{\Omega}_0^\varepsilon} \widetilde{a}_0
|\nabla u^\varepsilon|^2 \,dx\right)^{1/2} \rightarrow 0.
\label{74n}
\end{equation}

The function $u^\varepsilon$ coincides with
$\widetilde{u}^\varepsilon$ on $\Omega_2^\varepsilon$. Then, via
(\ref{7.6}) we have convergence of the last term on the left hand
side of (\ref{7.38}):
\begin{equation}
\lim_{\varepsilon \to 0} \,a_2 \int\limits_{\Omega_2^\varepsilon}
\nabla u^\varepsilon \cdot \nabla \psi^\varepsilon \,dx =
 \lim_{\varepsilon \to 0} \left[a_2 \int\limits_{\Omega_2} \nabla
\widetilde{u}^\varepsilon \cdot \nabla \psi_0 \,dx - a_2
\int\limits_{\widetilde{\Omega}_0^\varepsilon \cap \Omega_2} \nabla
\widetilde{u}^\varepsilon \cdot \nabla \psi_0 \,dx \right] =  a_2
\int\limits_{\Omega_2} \nabla u_0 \cdot \nabla \psi_0 \,dx.
\label{75n}
\end{equation}

Thus, passing to the limit as $\varepsilon \to 0$ on the left hand
side of (\ref{7.38}) via (\ref{72n})--(\ref{75n}), and on the right
hand side via (\ref{7.37}), we arrive at
\[
a_0 \int\limits_{\Omega_1} \int\limits_{Q_0} \nabla_y v \cdot
\varphi \nabla_y b \,dy \,dx + \int\limits_{\Omega_1}
A^{\rm{hom}}\nabla u_0\cdot \nabla \psi_0 \,dx + a_2
\int\limits_{\Omega_2} \nabla u_0 \cdot \nabla \psi_0 \,dx =
 \lambda_0 \int\limits_{\mathbb{R}^n} \int\limits_{Q} (u_0 + v)
(\psi_0 + \varphi\, b) \,dy \,dx .
\]
Since the space of functions from (\ref{7.39}) is dense in
$\mathcal{V}$ (see (\ref{Qdomain})), the latter is equivalent to
(\ref{10}). It follows from (\ref{7.37}), Proposition \ref{p5.3}
(\textit{ii}) and the normalization of $u^\varepsilon$ that
$u^0(x,y) \not\equiv 0$. Thus we have proved that $\lambda_0$ and
$u^0(x,y)$ are respectively an eigenvalue and an eigenfunction of
the operator $A_0$, completing the proof of the theorem.
\end{proof}

\begin{remark}
Theorem \ref{th6} combined with \cite[Theorem 2]{FigKlein} implies
the existence of eigenvalues of $A_0$ in the gaps of its essential
spectrum, provided $\Omega_2$ is large enough and/or $a_2$ is small
enough.
\end{remark}

\begin{remark}
It is not hard to show that there holds the strong two-scale
resolvent convergence $A_\varepsilon \stackrel{2}\rightarrow A_0$,
see Definition \ref{defrc}. Namely, considering the resolvent
equation
\[
A_\varepsilon w^\varepsilon + w^\varepsilon = f^\varepsilon,
\]
where $f^\varepsilon \stackrel{2}\rightharpoonup f^0$, and employing
essentially the same arguments as above (cf. also \cite[Theorem
5.1]{Zhikov2}), one can pass to the limit as $\varepsilon \to 0$ in
the weak form of the resolvent equation choosing appropriate test
functions, cf. (\ref{7.38})--(\ref{75n}), to obtain that
$w^\varepsilon \stackrel{2}\rightharpoonup w^0$, with
\[
A_0 w^0 + w^0 = f^0.
\]
Further, arguing as in \cite[\S 4.3]{Zhikov2}, cf. also proof of
Lemma \ref{L7} above, one can show that the above weak two-scale
convergence implies the strong one, i.e. $w^\varepsilon
\stackrel{2}\to w^0$ as long as $f^\varepsilon \stackrel{2}\to f^0$,
which means the strong two-scale resolvent convergence by the
definition. The latter implies in particular the strong two-scale
convergence of spectral projectors ($P_\varepsilon(\lambda)
\stackrel{2}\to P_0(\lambda)$ if $\lambda$ is not an eigenvalue of
$A_0$), see \cite{ReedSimon, Zhikov1}, and has other nice
properties, however it does not imply in its own the convergence of
the spectra. The latter requires an additional (two-scale)
compactness property to hold, which Theorem \ref{th6} provides.
\end{remark}

\begin{remark}The function $v(x,y)$ could be represented as a product of
$u_0(x)\big|_{\Omega_1}$ and $\lambda _0 b(y)$, where $b(y)$ solves
(\ref{13a}) with $\lambda =\lambda _0$. Then $v(x,\varepsilon^{-1}
x)$  strongly two-scale converges to $v(x,y)$ by the mean value
property and the properties of two-scale convergence. Then
\begin{equation}
u^{\rm appr}(x,\varepsilon):=
    \left\{
      \begin{array}{ll}
         u_0(x)+v(x,x/\varepsilon), & x\in \Omega_0^\varepsilon,
          \\
          u_0(x), & x\in \mathbb{R}^n \backslash \Omega_0^\varepsilon,
      \end{array} \right. \label{91}
\end{equation}
also strongly two-scale converges to $u^0(x,y)$. Hence it
approximates the eigenfunction $u^\varepsilon(x)$:
\begin{equation}
\| u^{\rm appr}-u^\varepsilon\|^2_{L_2(\mathbb R^n)} \rightarrow 0.
\label{92}
\end{equation}
\end{remark}

Now, using the result of Theorem \ref{th6} we will discuss the
multiplicity properties of the eigenvalues $\lambda_{\varepsilon}$
and $\lambda_0$. Let us assume that the multiplicity of the
eigenvalue $\lambda_0$ of $A_0$ is $m$. Suppose that for a
subsequence $\varepsilon_k \to 0$ there exist $l$ (accounting for
multiplicities) eigenvalues of $A_\varepsilon $,
$\lambda_{\varepsilon_k,1}\leq \lambda_{\varepsilon_k,2}, \ldots
\leq \lambda_{\varepsilon_k,l}$, such that
$\lambda_{\varepsilon_k,i} \to \lambda_0$, $i=1,\ldots,l$. Let
$u_i^{\varepsilon_k}$ be the corresponding eigenfunctions
orthonormalized in $L^2(\mathbb{R}^n)$. It follows from Theorem
\ref{th6} that there exists a subsequence $k_m$ such that
\[
u_i^{\varepsilon_{k_m}} \stackrel{2}\rightarrow u_i^0, \,\,
i=1,\ldots,l,
\]
where $u_i^0$ are eigenfunctions of $A_0$ corresponding to
$\lambda_0$. In particular, due to the strong two-scale convergence,
we have convergence of the inner products as a consequence of the
convergence of norms:
\[
(u_i^{\varepsilon_{k_m}},
u_j^{\varepsilon_{k_m}})_{L^2(\mathbb{R}^n)} \rightarrow
(u_i^0,u_j^0)_{\mathcal {H}_0}.
\]
However $(u_i^{\varepsilon_{k_m}},
u_j^{\varepsilon_{k_m}})_{L^2(\mathbb{R}^n)} = \delta_{ij}$. Then
$u_i^0, \,\, i=1,\ldots,l$ are also orthonormal (in $\mathcal
{H}_0$), i.e. there exist at least $l$ linearly independent
eigenfunctions of $A_0$ corresponding to $\lambda_0$. Thus,  $l \leq
m$.

The results presented in \cite{SmysKam} remain also valid for the
setting of the problem in the present paper, i.e. when the
coefficients of the divergence form operator $A_\varepsilon$ are of
the form (\ref{4}). By Theorem 4.1 of \cite{SmysKam}, if $\lambda_0$
is an eigenvalue of the limit operator $A_0$ lying in a gap of its
essential spectrum, then for small enough $\varepsilon$, there exist
eigenvalues (or at least one eigenvalue) of $A_\varepsilon$ such
that
\[
|\lambda_{\varepsilon,i} - \lambda_0| \leq C \varepsilon^{1/2}, \,
i=1,\ldots,l(\varepsilon).
\]
Moreover, again by \cite[Thm 4.1]{SmysKam}, for any eigenfunction
$u^0_i$ of $A_0$ corresponding to $\lambda_0$ the related $u^{\rm
appr}_i$, see (\ref{91}), can be approximated by a linear
combination of the eigenfunctions of $A_\varepsilon$ corresponding
to $\lambda_{\varepsilon,i}, \, i=1,\ldots,l(\varepsilon)$. Since,
by the above, $(u^{\rm appr}_i, u^{\rm appr}_j)_{L^2(\mathbb{R}^n)}
\to \delta_{ij}$, as $\varepsilon \to 0$, $i,j = 1,\ldots ,m$, it is
not hard to show that $l(\varepsilon)\geq m$. Hence we conclude that
there exist exactly $m$ eigenvalues (counted with their
multiplicities) of $A_\varepsilon$ such that
\[
|\lambda_{\varepsilon,i} - \lambda_0| \leq C \varepsilon^{1/2}, \,
i=1,\ldots,m,
\]
where $m$ is a multiplicity of $\lambda_0$. In other words there is
an ``asymptotic one-to-one correspondence'' between isolated
eigenvalues and eigenfunctions of the operators $A_\varepsilon$ and
$A_0$.

\section{Identity of the essential spectra of $\widehat{A}_0$ and
$A_0$, convergence of the spectra of $A_\varepsilon$ in the sense of
Hausdorff}\label{s7}

By definition, the Hausdorff convergence of spectra,
$\sigma(A_\varepsilon) \stackrel{H}\rightarrow \sigma(A_0)$ as
$\varepsilon \to 0$, means that
\begin{itemize}
  \item for all $\lambda \in \sigma(A_0)$ there are $\lambda_\varepsilon \in
  \sigma(A_\varepsilon)$ such that $\lambda_\varepsilon \to
  \lambda$;
  \item if $\lambda_\varepsilon \in \sigma(A_\varepsilon)$ and $\lambda_\varepsilon \to
  \lambda$, then $\lambda \in \sigma(A_0)$.
\end{itemize}
We remind that $\widehat{A}_\varepsilon$ and $\widehat{A}_0$ denote
the `unperturbed' operators corresponding to $A_\varepsilon$ and
$A_0$, see Section \ref{s2}. It was shown in \cite{Zhikov1} that
$\sigma(\widehat{A}_\varepsilon) \stackrel{H}\rightarrow
\sigma(\widehat{A}_0)$ (the spectra of both
$\widehat{A}_\varepsilon$ and $\widehat{A}_0$ are purely essential).
In \cite{FigKlein} it is proved that the essential spectrum of a
divergence  form operator $\left. -\nabla \cdot a(x) \nabla \right.$
(where $a(x) \geq \delta > 0$ is a scalar function) remains
unperturbed with respect to the local perturbation of the
coefficient $a(x)$. Applying this assertion to the operator
$\widehat{A}_\varepsilon$ and its perturbation $A_\varepsilon$ we
conclude that
$\sigma(\widehat{A}_\varepsilon)=\sigma_{\mathrm{ess}}(A_\varepsilon)
\stackrel{H}\rightarrow \sigma(\widehat{A}_0)$. Let us assume that
$\sigma(\widehat{A}_0)=\sigma_{\mathrm{ess}}(A_0)$. Then
$\sigma_{\mathrm{ess}}(A_\varepsilon) \stackrel{H}\rightarrow
\sigma_{\mathrm{ess}}(A_0)$. In this case Theorem \ref{th6} together
with the results of \cite{SmysKam} imply the convergence of the
discrete spectra in the gaps ($\sigma_{\mathrm{disc}}(A_\varepsilon)
\stackrel{H}\rightarrow \sigma_{\mathrm{disc}}(A_0)$) and,
consequently, we would have $\sigma(A_\varepsilon)
\stackrel{H}\rightarrow \sigma(A_0)$. However, we cannot apply the
result of \cite{FigKlein} as it is stated to the case of the
two-scale operators $\widehat{A}_0$ and $A_0$. In this section we
prove the stability of the essential spectrum of $\widehat{A}_0$
with respect to the local perturbation of its coefficients,
establishing thereby the missing part of the reasoning. We do this
by direct means using the Weyl's criterium for the essential
spectrum of an operator, see e.g. \cite{Birman}.

\begin{theorem}\label{th7}
The essential spectra of the operators $\widehat{A}_0$ and $A_0$
coincide.
\end{theorem}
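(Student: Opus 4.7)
The plan is to apply Weyl's criterion in both directions: $\lambda\in\sigma_{\textrm{ess}}(B)$ for self-adjoint $B$ iff there exists a singular sequence $\{u^n\}\subset\textrm{dom}(B)$ with $\|u^n\|=1$, $u^n\rightharpoonup 0$, and $\|(B-\lambda)u^n\|\to 0$. The pivotal observation is that $A_0$ and $\widehat A_0$ act identically on functions $u=u_0+v\in\mathcal V$ whose $x$-support avoids $\overline\Omega_2$: the $\Omega_2$-term in (\ref{6}) vanishes and the $\Omega_1$-integrals in $B_0$ and in the analogous form $\widehat B_0$ defining $\widehat A_0$ coincide. The strategy is therefore, in both directions, to start with a singular sequence for one operator and localize it away from $\overline\Omega_2$ by a macroscopic cutoff so that both operators act identically on it.

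Let $\{u^n=u_0^n+v^n\}$ be a singular sequence for $A_0$ (or $\widehat A_0$) at $\lambda$. From $\|(A_0-\lambda)u^n\|\to 0$ and $\|u^n\|=1$ one obtains $B_0(u^n,u^n)\to\lambda$, so $\{u^n\}$ is bounded in the form domain $\mathcal V$. Introduce radial cutoffs $\chi_m\in C^\infty(\mathbb R^n)$ with $\chi_m\equiv 0$ on $B_m\supset\overline\Omega_2$, $\chi_m\equiv 1$ outside $B_{2m}$, and $|\nabla\chi_m|+|\Delta\chi_m|^{1/2}\le C/m$. Using that $u^n\rightharpoonup 0$ in $\mathcal H_0$, a diagonal extraction yields indices $n_k,m_k\to\infty$ such that $\tilde u^k:=\chi_{m_k}u^{n_k}$ satisfies $\|(1-\chi_{m_k})u^{n_k}\|_{\mathcal H_0}\to 0$; in particular $\|\tilde u^k\|\to 1$ and $\tilde u^k\rightharpoonup 0$. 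By the pivotal observation $\tilde u^k$ lies in the intersection of the two domains and both operators agree on it, so
\[
(\widehat A_0-\lambda)\tilde u^k=(A_0-\lambda)\tilde u^k=\chi_{m_k}(A_0-\lambda)u^{n_k}+[A_0,\chi_{m_k}]u^{n_k}.
\]
The first term vanishes in norm by hypothesis. Since $\chi_{m_k}=\chi_{m_k}(x)$ is independent of $y$, the commutator reduces to terms supported on the annulus $B_{2m_k}\setminus B_{m_k}$ involving $\nabla_x\chi_{m_k}\cdot\nabla u_0^{n_k}$ and $(\Delta\chi_{m_k})u_0^{n_k}$, whose norm is bounded by $C/m_k\cdot\|u^{n_k}\|_{\mathcal V}\to 0$. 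Thus $\tilde u^k$ is a singular sequence at $\lambda$ for the other operator, yielding the required inclusion.

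The main obstacle is establishing $\|(1-\chi_{m_k})u^{n_k}\|_{\mathcal H_0}\to 0$ for the micro component $v^{n_k}$, since $L^2(\Omega_1;H^1_0(Q_0))$ lacks Rellich-type compactness in the $x$-direction. For the macro part $u_0^n$ this is immediate from the uniform $H^1$-bound and Rellich, giving $u_0^n\to 0$ in $L^2_{\textrm{loc}}$. For $v^n$ the remedy is the fiber equation extracted from $(A_0-\lambda)u^n\to 0$ by testing with $\varphi(x)\psi(y)$, $\psi\in H^1_0(Q_0)$: to leading order $v^n(x,\cdot)$ solves $-a_0\Delta_y v^n(x,\cdot)-\lambda v^n(x,\cdot)=\lambda u_0^n(x)$ pointwise in $x$. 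When $\lambda\notin\{\lambda_j\}$ the fiber resolvent $(T-\lambda)^{-1}$ is uniformly bounded, so $\|v^n(x,\cdot)\|_{L^2(Q_0)}\le C|u_0^n(x)|+o(1)$, which transfers the local $L^2$-convergence from $u_0^n$ to $v^n$ and legitimizes the diagonal choice above. The exceptional case $\lambda\in\{\lambda_j\}$ is automatic, since $\lambda_j\in\sigma(\widehat A_0)=\sigma_{\textrm{ess}}(\widehat A_0)$ by the $\beta(\lambda)$-characterization from Section \ref{s2}; the reverse inclusion at $\lambda_j$ is obtained from the translation-invariance of $\widehat A_0$ applied to explicit Weyl sequences built from the fiber eigenfunction $\varphi_j$, localized and shifted arbitrarily far from $\overline\Omega_2$.
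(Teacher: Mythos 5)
Your strategy is sound and it isolates the same two difficulties that drive the paper's proof: the operators agree on functions whose $x$-support avoids $\overline{\Omega}_2$, and the component $v$ has no Rellich-type compactness in $x$, so weak convergence of a singular sequence does not by itself let you discard its mass near the defect. The paper organizes the hard direction ($\sigma_{\mathrm{ess}}(A_0)\subset\sigma_{\mathrm{ess}}(\widehat A_0)$) as a dichotomy on the \emph{sequence}: either it is tight (all but $\delta_i$ of the mass eventually in a fixed ball), in which case it shows $u_0^{(k)}\to0$ strongly and reads off $\lambda\in\sigma(A_y)\subset\sigma_{\mathrm{ess}}(\widehat A_0)$ from the fibre operator $-a_0\Delta_y$; or a definite amount of mass escapes, in which case it cuts off near infinity. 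You instead dichotomize on $\lambda$: if $\lambda\notin\sigma(T)$ you use the uniformly bounded fibre resolvent to transfer $u_0^n\to0$ in $L^2_{\mathrm{loc}}$ to $v^n$, forcing the mass to escape, and if $\lambda\in\sigma(T)$ the inclusion is automatic. These are logically equivalent routes built from the same ingredients; yours is arguably the tidier bookkeeping. For the opposite direction the paper simply translates the singular sequence, exploiting the translation invariance of $\widehat A_0$, which is cheaper than your cutoff and works uniformly in $\lambda$, with no exceptional case.

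Two loose ends. First, ``$u_0^n\to0$ in $L^2_{\mathrm{loc}}$ is immediate from Rellich'' is not quite immediate: Rellich gives a strong local limit $u$, but identifying $u=0$ requires combining $u^n\rightharpoonup0$ in $\mathcal H_0$ with the structure of the non-orthogonal decomposition $u^n=u_0^n+v^n$ --- e.g. test against $\varphi(x)\theta_{Q_1}(y)$, which does not see $v^n$, or run the paper's two-inner-product computation yielding $\int u^2=|Q_0|\int u^2$. Second, your explicit Weyl sequence $\phi_k(x)\varphi_j(y)$ at $\lambda=\lambda_j$ is singular for $\widehat A_0$ only when $\langle\varphi_j\rangle_y=0$: if $\langle\varphi_j\rangle_y\neq0$, computing $\widehat A_0(\phi_k\varphi_j)$ from the form produces a non-vanishing component on $Q_1$ proportional to $\langle\varphi_j\rangle_y\,\phi_k$, so $\|(\widehat A_0-\lambda_j)(\phi_k\varphi_j)\|$ does not tend to zero. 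In that case $\beta(\lambda)\to+\infty$ as $\lambda\uparrow\lambda_j$, so $\lambda_j$ is an endpoint of a band and lies in $\sigma_{\mathrm{ess}}(A_0)$ by applying your main argument to nearby non-exceptional band points and using closedness of the essential spectrum. Both points are fixable, and with them supplied the proof goes through.
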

\begin{proof}

\textbf{Step 1.} First we describe the domains of $\widehat{A}_0$
and $A_0$. According to the Friedrichs extension procedure, see e.g.
\cite{ReedSimon}, a function $u$ belongs to $\mathcal D (A_0)$ if
and only if $u = u_0(x) + v(x,y) \in \mathcal V$ and there exists $h
\in \mathcal H_0$ such that
\[
B_0(u,w) = (h, w)_{\mathcal {H}_0}
\]
for all $w \in \mathcal V$, see (\ref{5.1})--(\ref{6}). If $u = u_0
+ v \in \mathcal D(A_0)$ then $u_0, v \in \mathcal D(A_0) $. Due to
the regularity properties of solutions of elliptic equations, $u_0
\in H^2_{\mathrm{loc}}$ everywhere away from the boundary of
$\Omega_2$.

Operator $\widehat{A}_0$ acting in the Hilbert space $\mathcal
{\widehat{H}}_0$ was described in \cite{Zhikov1} and is generated by
a (closed) symmetric and bounded from below bilinear form
$\widehat{B}_0(u,w)$ on a dense subspace $\mathcal {\widehat{V}}$ of
$\mathcal {\widehat{H}}_0$, where $\mathcal {\widehat{H}}_0$,
$\mathcal {\widehat{V}}$ and $\widehat{B}_0(u,w)$ are defined by
(\ref{5.1})--(\ref{6}) with $\Omega_2 = \emptyset$ and $\Omega_1 =
\mathbb{R}^n$. A function $u$ belongs to domain $\mathcal D
(\widehat{A}_0)$ if and only if $u = u_0(x) + v(x,y) \in \mathcal
{\widehat{V}}$ and there exists $h \in \mathcal {\widehat{H}}_0$
such that
\[
\widehat{B}_0(u,w) = (h, w)_{\mathcal {\widehat{H}}_0}
\]
for all $w \in \mathcal {\widehat{V}}$. If $u = u_0 + v \in \mathcal
D(\widehat{A}_0)$ then $u_0, v \in \mathcal D(\widehat{A}_0)$, $u_0
\in H^2(\mathbb{R}^n)$.

Let $A$ be a self-adjoint operator with domain $\mathcal D(A)$
acting in a Hilbert space $H$. By the Weyl's criterium, see e.g.
\cite{Birman}, condition $\lambda \in \sigma_{\mathrm{ess}}(A)$ is
equivalent to the existence of a singular sequence $u^{(k)}\in
\mathcal D (A)$, i.e. such that
\begin{equation}
0 < C_1 \leq \|u^{(k)}\|_H \leq C_2, \label{68}
\end{equation}
\begin{equation}
u^{(k)} \rightharpoonup 0 \mbox{ weakly in }  H, \label{69}
\end{equation}
\begin{equation}
(A - \lambda)u^{(k)} \rightarrow 0  \mbox{ strongly in }  H.
\label{70}
\end{equation}

\textbf{Step 2.} Let $\lambda \in
\sigma_{\mathrm{ess}}(\widehat{A}_0)$ and $u^{(k)} = u^{(k)}_0(x) +
v^{(k)}(x,y)$ be the corresponding singular sequence in $\mathcal
D(\widehat{A}_0) \subset \mathcal {\widehat{H}}_0$. We want to
construct on its basis a singular sequence for the operator $A_0$,
i.e. in $\mathcal D(A_0) \subset \mathcal H_0$ and satisfying
properties (\ref{68})--(\ref{70}). First notice that the gradient of
$u^{(k)}_0$ is bounded in $L^2(\mathbb{R}^n)$. Indeed, from
(\ref{6}) and (\ref{70}) we have
\begin{equation}
\|\nabla u^{(k)}_0\|^2_{L^2(\mathbb{R}^n)} \leq C
\widehat{B}_0(u^{(k)},u^{(k)}) = C \lambda
(u^{(k)},u^{(k)})_{\mathcal {\widehat{H}}_0} + o(1) \leq C.
\label{72}
\end{equation}

Let us define a cut-off function
\[
\eta_{k, R}(x) = \eta\left(\frac{1}{k}(|x|-R)\right),
\]
where $\eta \in C^2(\mathbb{R})$ is such that
\[
\eta(t) = \left\{
      \begin{array}{ll}
         1, & t \leq 0,
          \\
          0, & t \geq 1.
      \end{array} \right.
\]

Consider the following sequence, $u^{(k)}\eta_{k, R_k}\in \mathcal D
(\widehat{A}_0)$, where $R_k$ is chosen large enough so that $\|
u^{(k)}(1-\eta_{k, R_k})\|_{\mathcal {\widehat{H}}_0} \leq
\frac{1}{k}$. This sequence obviously satisfies (\ref{68}) regarding
the operator $\widehat{A}_0$.

Let us check property (\ref{70}). The operator $\widehat{A}_0$ acts
on a function $u\in H^2(\mathbb{R}^n) \subset \mathcal
{\widehat{H}}_0$ as follows\footnote[7]{If $u = u_0(x) + v(x,y)$
then $\widehat{A}_0 u = h \in \widehat{\mathcal{H}}_0$ implies $-
\nabla \cdot A^{\mathrm{hom}}\nabla u_0 = \langle h \rangle_y$ and
$- a_0 \Delta_y v = h(x,y), \, y \in Q_0$.}, cf. \cite{Zhikov1}. Let
\[
- \nabla \cdot A^{\mathrm{hom}}\nabla u(x) = f(x) \in
L^2(\mathbb{R}^n).
\]
Then, by the definition of $\widehat{A}_0$, we have
\[
\widehat{A}_0 u(x) = |Q_1|^{-1} \theta_{Q_1} (y) f(x) \in \mathcal
{\widehat{H}}_0.
\]
Note that
\[
\|\widehat{A}_0 u\|_{\mathcal {\widehat{H}}_0} = |Q_1|^{-1/2}
\|f\|_{L^2(\mathbb{R}^n)}.
\]
For $u^{(k)}\eta_{k, R_k}$ we derive
\[
\begin{aligned}
\widehat{A}_0 \left(u^{(k)}\eta_{k, R_k}\right) = \eta_{k, R_k}
\widehat{A}_0 u^{(k)} - |Q_1|^{-1} \theta_{Q_1} (y)\left( 2 \nabla
\eta_{k, R_k} \cdot A^{\mathrm{hom}}\nabla u^{(k)}_0 + u^{(k)}_0
\nabla \cdot A^{\mathrm{hom}}\nabla \eta_{k, R_k}\right).
\end{aligned}
\]
Thus we arrive at
\begin{equation}
\begin{aligned}
\left\|(\widehat{A}_0 - \lambda)(u^{(k)}\eta_{k,
R_k})\right\|_{\mathcal {\widehat{H}}_0} \leq \left\| \eta_{k, R_k}
(\widehat{A}_0 - \lambda)u^{(k)}\right\|_{\mathcal {\widehat{H}}_0}
+ |Q_1|^{-1/2}\left(2\,\left\|\nabla \eta_{k, R_k} \cdot
A^{\mathrm{hom}}\nabla u^{(k)}_0\right\|_{L^2(\mathbb{R}^n)} +
\right.
\\
\left. + \left\|u^{(k)}_0 \nabla \cdot A^{\mathrm{hom}}\nabla
\eta_{k, R_k}\right\|_{L^2(\mathbb{R}^n)}\right) =
 o(1) + \frac{1}{k} O\left(\left\|\nabla
u^{(k)}_0\right\|_{L^2(\mathbb{R}^n)}\right) + \frac{1}{k^2}
O\left(\left\| u^{(k)}_0\right\|_{L^2(\mathbb{R}^n)}\right).
\label{721}
\end{aligned}
\end{equation}
Due to (\ref{68}) and (\ref{72}) the latter converges to $0$ as
$k\to \infty$. Hence (\ref{70}) holds regarding $\widehat{A}_0$.

Now notice that if $\,\mathrm{supp}\, u \cap \overline{\Omega}_2 =
\emptyset$, then $u\in \mathcal D (\widehat{A}_0)$ if and only if
$u\in \mathcal D (A_0)$; besides $\widehat{A}_0 u= A_0 u$. We hence
next shift the supports of the elements of the sequence away from
$\Omega_2$ ensuring also that the new sequence is weakly convergent
to maintain (\ref{69}). Since $\mathrm{supp}\, \eta_{k, R_k}$ is a
closed ball of radius $R_k+k$ centered at the origin, the shift of
$x$ by $\xi_k := \left(R_k+2k+\mathrm{diam} (\Omega_2) \right)\xi$
for every $k$, where $\xi$ is an arbitrary unit vector from
$\mathbb{R}^n$, will do the job. Hence, for the given $\lambda$ we
have constructed a singular sequence
\[
w^{(k)}(x,y) = u^{(k)}(x+\xi_k, y) \, \eta_{k, R_k} (x+\xi_k),
\]
satisfying all the properties (\ref{68})--(\ref{70}) for the
operator $A_0$. Namely, the translational invariance of
$\widehat{A}_0$ in $x$ ensures that (\ref{68}) and (\ref{70}) are
satisfied. Finally, (\ref{69}) follows from the pointwise
convergence of $w^{(k)}$ to zero as $k \to \infty$ (since for any
fixed $x$, $w^{(k)}(x,y) = 0$ for large enough $k$). Thus $\lambda
\in \sigma_{\mathrm{ess}}(A_0)$.

\textbf{Step 3.} Suppose now that $\lambda \in
\sigma_{\mathrm{ess}}(A_0)$ and $u^{(k)} = u^{(k)}_0(x) +
v^{(k)}(x,y)$ is the corresponding singular sequence. Let $R$ be
such that $\overline{\Omega}_2 \subset B_R$. There are only two
alternative possibilities\footnote[8]{Let $A_{ki}:=\|u^{(k)}(1 -
\theta_{B_{R+i}})\|_{\mathcal H_0}$ and let $\delta _i :=
\sup\limits_k A_{ki}$. Then either $\delta _i \to 0$ giving
(\ref{73}) or $\delta _i \nrightarrow 0$ yielding (\ref{74}).}:
\begin{itemize}
  \item There exists a sequence $\delta_i \to 0$ such that for any $i
  \in \mathbb{N}$
\begin{equation}
\|u^{(k)}(1 - \theta_{B_{R+i}})\|_{\mathcal H_0} \leq \delta_i
\label{73}
\end{equation}
for all $k$.
  \item There exist a constant $M>0$ and subsequences $k(j)\to \infty, \, i(j) \to \infty$ as $j\to \infty$ such that
\begin{equation}
\|u^{(k(j))}(1 - \theta_{B_{R+i(j)}})\|_{\mathcal H_0} \geq M
\label{74}
\end{equation}
for all $j$.
\end{itemize}

Let (\ref{73}) take place. The sequence $\nabla u^{(k)}_0$ is
bounded in $L^2(\mathbb{R}^n)$, cf. (\ref{72}). From (\ref{73}) and
\begin{equation}
\|f\|_{L^2(\mathbb{R}^n)} = \|f\|_{\mathcal H_0}, \mbox{ for all }
f\in L^2(\mathbb{R}^n)\subset \mathcal H_0, \label{75}
\end{equation}
it follows that
\[
u^{(k)}_0 \rightarrow  u(x) \mbox{ in }  L^2(\mathbb{R}^n),
\]
up to a subsequence. The reasoning leading to this assertion is
essentially identical to the one in (\ref{59})--(\ref{7.9}) and is
not reproduced. From (\ref{69}) and the latter we conclude that
\[
v^{(k)}(x,y) \rightharpoonup - \, u(x) \mbox{ weakly in }  \mathcal
{H}_0.\] Hence, on one hand, we have
\[
\left(u, v^{(k)}\right)_{\mathcal {H}_0} \rightarrow - \left(u, u
\right)_{\mathcal {H}_0} = - \int\limits_{\mathbb{R}^n} u^2 \, dx.
\]
On the other hand,
\[
\left(u, v^{(k)}\right)_{\mathcal {H}_0} =
\int\limits_{\mathbb{R}^n} \int\limits_{Q_0}u \, v^{(k)} \,dy \,dx =
\left(u \, \theta_{Q_0}(y) , v^{(k)}\right)_{\mathcal {H}_0}
\rightarrow  - \left(u \, \theta_{Q_0}(y), u \right)_{\mathcal
{H}_0} = - \, |Q_0| \int\limits_{\mathbb{R}^n} u^2 \, dx.
\]
Comparing the last two formulas, conclude that at $u \equiv 0$, i.e.
\begin{equation}
u^{(k)}_0 \rightarrow  0 \mbox{ in } L^2(\mathbb{R}^n). \label{76}
\end{equation}
Denote $A_0 u^{(k)}$ by $g^{(k)}(x,y) = g_0^{(k)}(x) + h^{(k)}(x,y)
\in \mathcal {H}_0$. From (\ref{70}) and (\ref{75}) we get the
following convergence:
\[
\|g_0^{(k)} - \lambda u^{(k)}_0 \|_{L^2(\mathbb{R}^n)} \to 0,
\]
\begin{equation}
\|h^{(k)} - \lambda v^{(k)} \|_{\mathcal H_0} \to 0. \label{82}
\end{equation}
Then from (\ref{76}) we have
\begin{equation}
g^{(k)}_0 \rightarrow  0 \mbox{ in } L^2(\mathbb{R}^n). \label{83}
\end{equation}

Analogously to \cite{Zhikov2} we define a self-adjoint operator
$A_y$ acting in $L^2(\Omega_1 \times Q_0)$ by
\[
A_y v = - a_0 \Delta_y v = p, \quad p \in L^2(\Omega_1 \times Q_0).
\]
The domain of the operator, $\mathcal D(A_y) \subset L^2(\Omega_1,
H^1_0(Q_0))$, is the set of all the solution of this equation.
Similarly can be defined operator $\widehat{A}_y$ acting in
$L^2(\mathbb{R}^n \times Q_0)$. One can easily check the following
properties: $\mathcal D(A_y) \subset \mathcal D(A_0)$, $\mathcal
D(\widehat{A}_y) \subset \mathcal D(\widehat{A}_0)$, $\sigma (A_y) =
\sigma (\widehat{A}_y)$, $\sigma (A_y) \subset \sigma_{\mathrm{ess}}
(A_0)$ and, in particular,
\begin{equation}
\sigma (\widehat{A}_y) \subset \sigma_{\mathrm{ess}}
(\widehat{A}_0). \label{84}
\end{equation}
It is not difficult to see (by analyzing (\ref{6}), see also
\cite{Zhikov2}) that
\begin{equation}
A_y v^{(k)} = g_0^{(k)} \theta_{\Omega_1}(x) \theta_{Q_0}(y) +
h^{(k)}. \label{85}
\end{equation}
(Note that $A_y v^{(k)} \neq A_0 v^{(k)}$.)

Combining (\ref{82}), (\ref{83}) and (\ref{85}) we arrive at
\[
\|(A_y - \lambda) v^{(k)} \|_{L^2(\Omega_1 \times Q_0)} =
\|g_0^{(k)} \theta_{Q_0}(y) + h^{(k)} - \lambda v^{(k)}
\|_{L^2(\Omega_1 \times Q_0)} \to 0.
\]
This implies that $\lambda$ belongs to the spectrum of $A_y$ (notice
that (\ref{68}) holds for $v^{(k)}$ via (\ref{76})). Hence $\lambda
\in \sigma_{\mathrm{ess}}(\widehat{A}_0)$, see (\ref{84}).

Now let (\ref{74}) hold. Consider a sequence $w^{(j)} =
u^{(k(j))}(1-\eta_{i(j), R}) \in \mathcal D(\widehat{A}_0)$ (we
remind that $R$ is large enough to ensure $\Omega_2 \subset\subset
B_R$). Then
\[
\|w^{(j)}\|_{\mathcal {\widehat{H}}_0} \geq
\|u^{(k(j))}(1-\theta_{B_{R+i(j)}})\|_{\mathcal H_0} \geq M,
\]
i.e. (\ref{68}) is satisfied for $w^{(j)}$. Since the sequence
$1-\eta_{i(j), R}$ tends to $0$ pointwise, (\ref{69}) is valid.
Analogously to (\ref{721}) we derive
\begin{equation}
\begin{aligned}
&\|(\widehat{A}_0 - \lambda)w^{(j)}\|_{\mathcal {\widehat{H}}_0} =
\|({A}_0 - \lambda)w^{(j)}\|_{\mathcal {{H}}_0} \to 0,
\end{aligned}
\end{equation}
yielding (\ref{70}). Thus, we conclude that $\lambda \in
\sigma_{\mathrm{ess}}(\widehat{A}_0)$, completing the proof of the
theorem.
\end{proof}

\begin{remark}
Theorem \ref{th7} combined with \cite{Zhikov1} implies that
$\sigma_{\mathrm{ess}}(A_0) = \left\{\lambda : \beta (\lambda)\geq 0
\right\}\cup \sigma(A_y)$. Using the methods of \cite{Zhikov1} it is
not hard to show further that $\sigma_{\mathrm{ess}}(A_0)$ contains
no point spectrum (in particular, no embedded eigenvalues) except if
$\lambda$ is an eigenvalue of $A_y$ corresponding to an
eigenfunction with zero mean. It is natural to conjecture (cf.
\cite{Zhikov1}) that, outside these eigenvalues, the spectrum is
absolutely continuous and the ``eigenfunctions of the continuous
spectrum'' are $u(x,y,\lambda) = u_0(x,\lambda)(1 + \lambda
b(y,\lambda))$, where $u_0(x,\lambda)$ are solutions of the
appropriate scattering problems:
\begin{equation}\label{92a}
\begin{aligned}
\nabla \cdot A^{\mathrm{hom}} \nabla u_0 + \beta (\lambda) u_0 =0,&
\,\, x\in \mathbb{R}^n\backslash \overline{\Omega}_2,
\\
a_2 \Delta u_0 + \lambda u_0 =0,& \,\, x\in \Omega_2
\end{aligned}
\end{equation}
with the appropriate matching condition at $\partial \Omega_2$ and
radiation condition at infinity. A detailed study of this as well as
of the convergence of the related generalized eigenfunctions (cf.
\cite{Zhikov1} for the defect-free case) is beyond the scope of the
present paper.
\end{remark}

Summarizing the main results of the present paper we conclude that
Theorems \ref{th6} and \ref{th7} together with the results of
\cite{FigKlein,SmysKam} (see the discussions at the end of Section
\ref{s6} and in the beginning of the present section) establish the
validity of Theorem \ref{th1}.

\label{lastpage}

\end{document}